\newtheorem{theorem}{Theorem}[section]
\newtheorem{corollary}{Corollary}[section]
\newtheorem{prop}{Proposition}[section]
\newtheorem{lemma}{Lemma}[section]
\theoremstyle{definition}
\newtheorem{example1}{Example}
\newtheorem*{definition}{Definition}
\theoremstyle{remark}
\newcommand\xqed[1]{%
  \leavevmode\unskip\penalty9999 \hbox{}\nobreak\hfill
  \quad\hbox{#1}}
\newcommand\corner{\xqed{$\lrcorner$}}
\tikzset{node distance=1.6cm, auto}
\newenvironment{example}[0]{\begin{example1}}{\corner\end{example1}}
\renewcommand\@biblabel[1]{}
\title{Morita Equivalence\thanks{The authors can be reached at thomaswb@princeton.edu and hhalvors@princeton.edu. We would like to thank Dimitris Tsementzis, Jim Weatherall, and JB Manchak for helpful comments and discussion.} 
}
\author{Thomas William Barrett\\ 
Hans Halvorson
}
\date{}
\begin{document}
\maketitle

\begin{abstract}
%
%
Logicians and philosophers of science have proposed various formal criteria for theoretical equivalence. In this paper, we examine two such proposals: definitional equivalence and categorical equivalence. In order to show precisely how these two well-known criteria are related to one another, we investigate an intermediate criterion called Morita equivalence.
\end{abstract}


\section{Introduction}
Many theories admit different formulations, and these formulations often bear interesting relationships to one another. One relationship that has received significant attention from logicians and philosophers of science is theoretical equivalence.\footnote{See \cite{quine1975}, \cite{sklar1982}, \cite{halvorson2012, halvorson2013, halvorson2015}, \cite{glymour2013}, \cite{vanfraassen2014}, and \cite{coffey2014} for discussion of theoretical equivalence in philosophy of science.} In this paper we will examine two formal criteria for theoretical equivalence. The first criterion, called \textit{definitional equivalence}, has been known to logicians since the middle of the twentieth century.\footnote{\cite{artigue1978} and \cite{debouvere1965} attribute the concept of definitional equivalence to \cite{montague1957}. Definitional equivalence was certainly familiar to logicians by the late 1960s, as is evident from the work of \cite{debouvere1965}, \cite{shoenfield1967}, and \cite{kanger1968}.} It was introduced into philosophy of science by \cite{glymour1970, glymour1977, glymour1980}. The second criterion is called \textit{categorical equivalence}. It was first described by \cite{eilenbergmaclane1942, eilenbergmaclane1945}, but was only recently introduced into philosophy of science by \cite{halvorson2012, halvorson2015} and \cite{weatherallunpublished}.

In order to illustrate the relationship between these two criteria, we will consider a third criterion for theoretical equivalence called \textit{Morita equivalence.} We will show that these three criteria form the following hierarchy, where the arrows in the figure mean ``implies.''
\begin{center}
\includegraphics{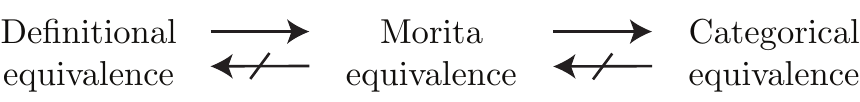}
\end{center}
Our discussion will allow us to evaluate definitional equivalence against categorical equivalence. Indeed, it will demonstrate a precise sense in which definitional equivalence is too strict a criterion for theoretical equivalence, while categorical equivalence is too liberal. There are theories that are not definitionally equivalent that one nonetheless has good reason to consider equivalent. And on the other hand, there are theories that are categorically equivalent that one has good reason to consider \textit{in}equivalent.

\section{Many-sorted logic}

All of these criteria for theoretical equivalence are most naturally understood in the framework of first-order many-sorted logic. We begin with some preliminaries about this framework.\footnote{Our notation follows \cite{hodges2008}. We present the more general case of many-sorted logic, however, while Hodges only presents single-sorted logic.}

\subsection{Syntax}

A \textbf{signature} $\Sigma$ is a set of sort symbols, predicate symbols, function symbols, and constant symbols. $\Sigma$ must have at least one sort symbol. Each predicate symbol $p\in\Sigma$ has an \textbf{arity} $\sigma_1\times\ldots\times\sigma_n$, where $\sigma_1,\ldots, \sigma_n\in\Sigma$ are (not necessarily distinct) sort symbols. Likewise, each function symbol $f\in\Sigma$ has an \textbf{arity} $\sigma_1\times\ldots\times\sigma_n\rightarrow\sigma$, where $\sigma_1,\ldots, \sigma_n,\sigma\in\Sigma$ are again (not necessarily distinct) sort symbols. Lastly, each constant symbol $c\in\Sigma$ is assigned a sort $\sigma\in\Sigma$. In addition to the elements of $\Sigma$ we also have a stock of variables. We use the letters $x$, $y$, and $z$ to denote these variables, adding subscripts when necessary. Each variable has a sort $\sigma\in\Sigma$.

A \textbf{$\mathbf{\Sigma}$-term} can be thought of as a ``naming expression'' in the signature $\Sigma$. Each $\Sigma$-term has a sort $\sigma\in\Sigma$. The $\Sigma$-terms of sort $\sigma$ are recursively defined as follows. Every variable of sort $\sigma$ is a $\Sigma$-term of sort $\sigma$, and every constant symbol $c\in\Sigma$ of sort $\sigma$ is also a $\Sigma$-term of sort $\sigma$. Furthermore, if $f\in\Sigma$ is a function symbol with arity $\sigma_1\times\ldots\times\sigma_n\rightarrow\sigma$ and $t_1,\ldots, t_n$ are $\Sigma$-terms of sorts $\sigma_1, \ldots, \sigma_n$, then $f(t_1,\ldots, t_n)$ is a $\Sigma$-term of sort $\sigma$. We will use the notation $t(x_1,\ldots, x_n)$ to denote a $\Sigma$-term in which all of the variables that appear in $t$ are in the sequence $x_1,\ldots, x_n$, but we leave open the possibility that some of the $x_i$ do not appear in the term $t$. 

A \textbf{$\mathbf{\Sigma}$-atom} is an expression either of the form $s(x_1,\ldots, x_n)=t(x_1,\ldots, x_n)$, where $s$ and $t$ are $\Sigma$-terms of the same sort $\sigma\in\Sigma$, or of the form $p(t_1,\ldots, t_n)$, where $t_1,\ldots, t_n$ are $\Sigma$-terms of sorts $\sigma_1, \ldots, \sigma_n$ and $p\in\Sigma$ is a predicate of arity $\sigma_1\times\ldots\times\sigma_n$. The \textbf{$\mathbf{\Sigma}$-formulas} are then defined recursively as follows.
\begin{itemize}
\item Every $\Sigma$-atom is a $\Sigma$-formula.
\item If $\phi$ is a $\Sigma$-formula, then $\lnot\phi$ is a $\Sigma$-formula.
\item If $\phi$ and $\psi$ are $\Sigma$-formulas, then $\phi\rightarrow\psi$, $\phi\land\psi$, $\phi\lor\psi$ and $\phi\leftrightarrow\psi$ are $\Sigma$-formulas.
\item If $\phi$ is a $\Sigma$-formula and $x$ is a variable of sort $\sigma\in\Sigma$, then $\forall_\sigma x\phi$ and $\exists_\sigma x\phi$ are $\Sigma$-formulas.
\end{itemize}
In addition to the above formulas, we will use the notation $\exists_{\sigma=1}y \phi(x_1,\ldots, x_n, y)$ to abbreviate the formula $\exists_\sigma y(\phi(x_1,\ldots, x_n, y)\land\forall_\sigma z(\phi(x_1,\ldots, x_n,z)\rightarrow y=z))$. As above, the notation $\phi(x_1,\ldots, x_n)$ will denote a $\Sigma$-formula $\phi$ in which all of the free variables appearing in $\phi$ are in the sequence $x_1,\ldots, x_n$, but we again leave open the possibility that some of the $x_i$ do not appear as free variables in $\phi$. A $\mathbf{\Sigma}$\textbf{-sentence} is a $\Sigma$-formula that has no free variables.

\subsection{Semantics}

A \textbf{$\mathbf{\Sigma}$-structure} $A$ is an ``interpretation'' of the symbols in $\Sigma$. In particular, $A$ satisfies the following conditions.
\begin{itemize}
\item Every sort symbol $\sigma\in\Sigma$ is assigned a nonempty set $A_\sigma$. The sets $A_\sigma$ are required to be pairwise disjoint.
\item Every predicate symbol $p\in\Sigma$ of arity $\sigma_1\times\ldots\times\sigma_n$ is interpreted as a subset $p^A\subset A_{\sigma_1}\times\ldots\times A_{\sigma_n}$.
\item Every function symbol $f\in\Sigma$ of arity $\sigma_1\times\ldots\times\sigma_n\rightarrow\sigma$ is interpreted as a function $f^A:A_{\sigma_1}\times\ldots\times A_{\sigma_n}\rightarrow A_\sigma$.
\item Every constant symbol $c\in\Sigma$ of sort $\sigma\in\Sigma$ is interpreted as an element $c^A\in A_\sigma$.
\end{itemize}
Given a $\Sigma$-structure $A$, we will often refer to an element $a\in A_\sigma$ as ``an element of sort $\sigma$.''

Let $A$ be a $\Sigma$-structure with $a_1,\ldots, a_n\in A$ elements of sorts $\sigma_1,\ldots, \sigma_n$. We let $t(x_1,\ldots, x_n)$ be a $\Sigma$-term of sort $\sigma$, with $x_1,\ldots, x_n$ variables of sorts $\sigma_1,\ldots, \sigma_n$, and we recursively define the element $t^A[a_1,\ldots, a_n]\in A_\sigma$. If $t$ is the variable $x_i$, then $t^A[a_1,\ldots, a_n]=a_i$, and if $t$ is the constant symbol $c\in\Sigma$, then $t^A[a_1,\ldots, a_n]=c^A$. Furthermore, if $t$ is of the form $f(t_1,\ldots, t_m)$ where each $t_i$ is a $\Sigma$-term of sort $\tau_i\in\Sigma$ and $f\in\Sigma$ is a function symbol of arity $\tau_1\times\ldots\times\tau_m\rightarrow\sigma$, then
$$
t^A[a_1,\ldots, a_n]=f^A\big(t_1^A[a_1,\ldots, a_n],\ldots, t_m^A[a_1,\ldots, a_n]\big)
$$
One can think of the element $t^A[a_1,\ldots, a_n]\in A_\sigma$ as the element of the $\Sigma$-structure $A$ that is denoted by the $\Sigma$-term $t(x_1,\ldots, x_n)$ when $a_1,\ldots, a_n$ are substituted for the variables $x_1,\ldots, x_n$.

Our next aim is to define when a sequence of elements $a_1,\ldots, a_n\in A$ \textbf{satisfy} a $\Sigma$-formula $\phi(x_1,\ldots, x_n)$ in the $\Sigma$-structure $A$. When this is the case we write $A\vDash\phi[a_1,\ldots, a_n]$. We begin by considering $\Sigma$-atoms. Let $\phi(x_1,\ldots, x_n)$ be a $\Sigma$-atom with $x_1,\ldots, x_n$ variables of sorts $\sigma_1,\ldots, \sigma_n$ and let $a_1,\ldots, a_n\in A$ be elements of sorts $\sigma_1,\ldots, \sigma_n$. There are two cases to consider. First, if $\phi(x_1,\ldots, x_n)$ is the formula $s(x_1,\ldots, x_n)=t(x_1,\ldots, x_n)$, where $s$ and $t$ are $\Sigma$-terms of sort $\sigma$, then $A\vDash\phi[a_1,\ldots, a_n]$ if and only if
$$
s^A[a_1,\ldots, a_n]=t^A[a_1,\ldots, a_n]
$$
Second, if $\phi(x_1,\ldots, x_n)$ is the formula $p(t_1,\ldots, t_m)$, where each $t_i$ is a $\Sigma$-term of sort $\tau_i$ and $p\in\Sigma$ is a predicate symbol of arity $\tau_1\times\ldots\times \tau_m$, then $A\vDash\phi[a_1,\ldots, a_n]$ if and only if
$$
\big(t_1^A[a_1,\ldots, a_n], \ldots, t_m^A[a_1,\ldots, a_n]\big)\in p^A
$$
This definition is extended to all $\Sigma$-formulas in the following standard way.
\begin{itemize}
\item $A\vDash\lnot\phi[a_1,\ldots, a_n]$ if and only if it is not the case that $A\vDash\phi[a_1,\ldots, a_n]$.

\item $A\vDash \phi\land\psi[a_1,\ldots, a_n]$ if and only if $A\vDash\phi[a_1,\ldots, a_n]$ and $A\vDash\psi[a_1,\ldots, a_n]$. The cases of $\lor$, $\rightarrow$, and $\leftrightarrow$ are defined analogously.

\item Suppose that $\phi(x_1,\ldots, x_n)$ is $\forall_\sigma y \psi(x_1,\ldots, x_n, y)$, where $\sigma\in\Sigma$ is a sort symbol. Then $A\vDash\phi[a_1,\ldots, a_n]$ if and only if $A\vDash\psi[a_1,\ldots, a_n, b]$ for every element $b\in A_\sigma$. The case of $\exists_\sigma$ is defined analogously. 
\end{itemize}
If $\phi$ is a $\Sigma$-sentence, then $A\vDash\phi$ just in case $A\vDash\phi[]$, i.e.~the empty sequence satisfies $\phi$ in $A$.

\subsection{Relationships between structures}

There are different relationships that $\Sigma$-structures can bear to one another. An \textbf{isomorphism} $h:A\rightarrow B$ between $\Sigma$-structures $A$ and $B$ is a family of bijections $h_\sigma:A_\sigma\rightarrow B_\sigma$ for each sort symbol $\sigma\in\Sigma$ that satisfies the following conditions.
\begin{itemize}
\item For every predicate symbol $p\in\Sigma$ of arity $\sigma_1\times\ldots\times\sigma_n$ and all elements $a_1,\ldots, a_n\in A$ of sorts $\sigma_1,\ldots,\sigma_n$, $(a_1,\ldots, a_n)\in p^A$ if and only if $(h_{\sigma_1}(a_1),\ldots, h_{\sigma_n}(a_n))\in p^B$.
\item For every function symbol $f\in\Sigma$ of arity $\sigma_1\times\ldots\times\sigma_n\rightarrow\sigma$ and all elements $a_1,\ldots, a_n\in A$ of sorts $\sigma_1,\ldots,\sigma_n$,
$$
h_\sigma\big(f^A(a_1,\ldots, a_n)\big)=f^B\big(h_{\sigma_1}(a_1),\ldots, h_{\sigma_n}(a_n)\big)
$$
\item For every constant symbol $c\in\Sigma$ of sort $\sigma$, $h_\sigma(c^A)=c^B$.
\end{itemize}
When there is an isomorphism $h:A\rightarrow B$ one says that $A$ and $B$ are \textbf{isomorphic} and writes $A\cong B$.

There is another important relationship that $\Sigma$-structures can bear to one another. An \textbf{elementary embedding} $h:A\rightarrow B$ between $\Sigma$-structures $A$ and $B$ is a family of maps $h_\sigma:A_\sigma\rightarrow B_\sigma$ for each sort symbol $\sigma\in\Sigma$ that satisfies 
$$
A\vDash\phi[a_1,\ldots, a_n]\text{ if and only if } B\vDash\phi[h_{\sigma_1}(a_1),\ldots, h_{\sigma_n}(a_n)]
$$
for all $\Sigma$-formulas $\phi(x_1,\ldots, x_n)$ 
and elements $a_1,\ldots, a_n\in A$ of sorts $\sigma_1,\ldots, \sigma_n$. Given an isomorphism or elementary embedding $h:A\rightarrow B$, we will often use the notation $h(a_1,\ldots, a_n)$ to denote the sequence of elements $h_{\sigma_1}(a_1),\ldots, h_{\sigma_n}(a_n)$. Every isomorphism is an elementary embedding, but in general the converse does not hold.

There is an important relationship that can hold between structures of different signatures. Let $\Sigma\subset\Sigma^+$ be signatures and suppose that $A$ is a $\Sigma^+$-structure. One obtains a $\Sigma$-structure $A|_\Sigma$ by ``forgetting'' the interpretations of symbols in $\Sigma^+-\Sigma$. We call $A|_\Sigma$ the \textbf{reduct} of $A$ to the signature $\Sigma$, and we call $A$ an \textbf{expansion} of $A|_\Sigma$ to the signature $\Sigma^+$. Note that in general a $\Sigma$-structure will have more than one expansion to the signature $\Sigma^+$.

We can now discuss first-order theories in many-sorted logic. A \textbf{$\mathbf{\Sigma}$-theory} $T$ is a set of $\Sigma$-sentences. The sentences $\phi\in T$ are called the axioms of $T$. A $\Sigma$-structure $M$ is a \textbf{model} of a $\Sigma$-theory $T$ if $M\vDash\phi$ for all $\phi\in T$. A theory \textbf{$T$ entails} a sentence $\phi$, written $T\vDash\phi$, if $M\vDash\phi$ for every model $M$ of $T$. 

We begin our discussion of theoretical equivalence with the following preliminary criterion.
\begin{definition}
Theories $T_1$ and $T_2$ are \textbf{logically equivalent} if they have the same class of models.
\end{definition}
One can easily verify that $T_1$ and $T_2$ are logically equivalent if and only if $\{\phi: T_1\vDash\phi\}=\{\psi: T_2\vDash\psi\}$.

\section{Definitional equivalence}

Logical equivalence is a particularly strict criterion for theoretical equivalence. Indeed, theories can only be logically equivalent if they are formulated in the same signature. There are many cases, however, of theories in different signatures that are nonetheless intuitively equivalent. For example, the theory of groups can be formulated in a signature with a binary operation $\cdot$ and a constant symbol $e$, or it can be formulated in a signature with a binary operation $\cdot$ and a unary function $-1$ \citep{barrett2014a}. Similarly, the theory of linear orders can be formulated in a signature with the binary relation $<$, or it can be formulated in a signature with the binary relation $\leq$. Since logical equivalence does not capture any sense in which these theories are equivalent, logicians and philosophers of science have proposed more general criteria for theoretical equivalence. 

One such criterion is \textit{definitional equivalence}. This criterion is well known among logicians, and many results about it have been proven.\footnote{For example, see \cite{debouvere1965}, \cite{kanger1968}, \cite{pinter1978}, \cite{pelletier2003}, \cite{andreka2005}, \cite{friedman2014}, and \cite{barrett2014a}.} The basic idea behind definitional equivalence is simple. Theories $T_1$ and $T_2$ are definitionally equivalent if $T_1$ can define all of the symbols that $T_2$ uses, and in a compatible way, $T_2$ can define all of the symbols that $T_1$ uses. In order to state this criterion precisely, we need to do some work.

\subsection{Definitional extensions}

We first need to formalize the concept of a definition. Let $\Sigma\subset\Sigma^+$ be signatures and let $p\in\Sigma^+-\Sigma$ be a predicate symbol of arity $\sigma_1\times\ldots\times\sigma_n$. An \textbf{explicit definition of $p$ in terms of $\mathbf{\Sigma}$} is a $\Sigma^+$-sentence of the form
$$
\forall_{\sigma_1}x_1\ldots\forall_{\sigma_n}x_n\big(p(x_1,\ldots,x_n)\leftrightarrow\phi(x_1,\ldots, x_n)\big)
$$
where $\phi(x_1,\ldots, x_n)$ is a $\Sigma$-formula. Note that an explicit definition of $p$ in terms of $\Sigma$ can only exist if $\sigma_1,\ldots, \sigma_n\in \Sigma$. An explicit definition of a function symbol $f\in\Sigma^+-\Sigma$ of arity $\sigma_1\times\ldots\times\sigma_n\rightarrow\sigma$ is a $\Sigma^+$-sentence of the form
\begin{equation}
\label{functiondefinition}
\forall_{\sigma_1}x_1\ldots\forall_{\sigma_n}x_n\forall_\sigma y\big( f(x_1,\ldots, x_n)=y\leftrightarrow\phi(x_1,\ldots, x_n, y)\big)
\end{equation}
and an explicit definition of a constant symbol $c\in\Sigma^+-\Sigma$ of sort $\sigma$ is a $\Sigma^+$-sentence of the form 
\begin{equation}
\label{constantdefinition}
\forall_\sigma y\big(y=c\leftrightarrow\psi(y)\big)
\end{equation}
where $\phi(x_1,\ldots, x_n, y)$ and $\psi(y)$ are both $\Sigma$-formulas. Note again that these explicit definitions of $f$ and $c$ can only exist if $\sigma_1,\ldots, \sigma_n, \sigma\in\Sigma$.

Although they are $\Sigma^+$-sentences, \eqref{functiondefinition} and \eqref{constantdefinition} have consequences in the signature $\Sigma$. In particular, \eqref{functiondefinition} and \eqref{constantdefinition} imply the following sentences, respectively:
\begin{align*}
&\forall_{\sigma_1}x_1\ldots\forall_{\sigma_n}x_n\exists_{\sigma=1}y\phi(x_1,\ldots, x_n, y)\\
&\exists_{\sigma=1}y\psi(y)
\end{align*}
These two sentences are called the \textbf{admissibility conditions} for the explicit definitions \eqref{functiondefinition} and \eqref{constantdefinition}. 

A \textbf{definitional extension} of a $\Sigma$-theory $T$ to the signature $\Sigma^+$ is a theory
$$
T^+=T\cup\{\delta_s:s\in\Sigma^+-\Sigma\}
$$
that satisfies the following two conditions. First, for each symbol $s\in\Sigma^+-\Sigma$ the sentence $\delta_s$ is an explicit definition of $s$ in terms of $\Sigma$, and second, if $s$ is a constant symbol or a function symbol and $\alpha_s$ is the admissibility condition for $\delta_s$, then $T\vDash\alpha_s$. 

\subsection{Three results}

A definitional extension of a theory ``says no more'' than the original theory. There are a number of ways to make this idea precise. Of particular interest to us will be the following three. The reader is encouraged to consult \citet[p.~58--62]{hodges2008} for proofs of these results.

The first result captures a sense in which the models of a definitional extension $T^+$ are ``determined'' by the models of the original theory $T$. In order to specify a model of $T^+$, one needs to interpret all of the symbols in $\Sigma^+$. The interpretation of the symbols in $\Sigma^+-\Sigma$, however, ``comes for free'' given an interpretation of the symbols in $\Sigma$.
\begin{theorem}
Let $\Sigma\subset\Sigma^+$ be signatures and $T$ a $\Sigma$-theory. If $T^+$ is a definitional extension of $T$ to $\Sigma^+$, then every model $M$ of $T$ has a unique expansion $M^+$ that is a model of $T^+$.
\end{theorem}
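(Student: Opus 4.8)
The plan is to establish existence and uniqueness separately, with both resting on a single observation: because every defining formula appearing in the explicit definitions is a $\Sigma$-formula, its satisfaction in any $\Sigma^+$-expansion of $M$ is entirely determined by $M$ itself. So I would first record the standard reduct lemma: if $A$ is a $\Sigma^+$-structure and $\phi(x_1,\ldots,x_n)$ is a $\Sigma$-formula, then $A\vDash\phi[a_1,\ldots,a_n]$ if and only if $A|_\Sigma\vDash\phi[a_1,\ldots,a_n]$. This follows by a routine induction on the structure of $\phi$, since $\phi$ mentions no symbols outside $\Sigma$ and the interpretations of the $\Sigma$-symbols agree in $A$ and in $A|_\Sigma$. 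It is also worth noting that each $\delta_s$ defines $s$ purely in terms of the original signature $\Sigma$, so there is no dependency chain among the new symbols and the interpretations can be read off in any order.

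For existence, I would construct $M^+$ by reading each interpretation directly off the corresponding explicit definition. For a predicate symbol $p\in\Sigma^+-\Sigma$ of arity $\sigma_1\times\ldots\times\sigma_n$ whose definition $\delta_p$ is built from the $\Sigma$-formula $\phi$, set $p^{M^+}=\{(a_1,\ldots,a_n):M\vDash\phi[a_1,\ldots,a_n]\}$. For a function symbol $f$ with definition built from $\phi(x_1,\ldots,x_n,y)$, the admissibility condition $\alpha_f$ guarantees---since $M\vDash T$ and $T\vDash\alpha_f$---that for each tuple $(a_1,\ldots,a_n)$ there is exactly one $b$ with $M\vDash\phi[a_1,\ldots,a_n,b]$, and I define $f^{M^+}(a_1,\ldots,a_n)$ to be that unique $b$. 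Constant symbols are handled the same way, their admissibility condition picking out a unique element. By construction $M^+|_\Sigma=M$, so $M^+\vDash T$, and the reduct lemma lets me verify that $M^+$ satisfies each $\delta_s$; hence $M^+\vDash T^+$.

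For uniqueness, suppose $M^+$ and $M'$ are both expansions of $M$ that model $T^+$. Since both restrict to $M$ on $\Sigma$ and both satisfy every $\delta_s$, I would show their interpretations of each new symbol coincide. For a predicate $p$, satisfying $\delta_p$ forces both $p^{M^+}$ and $p^{M'}$ to equal the set of tuples satisfying $\phi$ in $M$, again by the reduct lemma. For a function or constant symbol, satisfying the biconditional in $\delta_s$ together with the uniqueness clause of the admissibility condition forces the two interpretations to agree pointwise. Hence $M^+=M'$.

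The one point requiring care is the function-symbol case, where the admissibility condition does double duty: its existence clause guarantees that $f^{M^+}$ is defined on every tuple, and its uniqueness clause guarantees that the value is unambiguous, so that the biconditional in $\delta_f$ can in fact be satisfied. This is the only place where the hypothesis $T\vDash\alpha_s$ is genuinely used, and I expect it to be the main thing to get right; everything else is the reduct lemma together with bookkeeping.
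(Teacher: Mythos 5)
Your proposal is correct and takes essentially the same route as the paper: the paper defers the proof of this theorem to Hodges, but its own proof of the Morita analogue (Theorem 4.1) handles the predicate, function, and constant cases exactly as you do---interpreting each new symbol by its defining $\Sigma$-formula, with the admissibility conditions securing existence and uniqueness of function and constant values, and uniqueness of the expansion following because satisfaction of $\Sigma$-formulas is fixed by $M$. Nothing further is needed.
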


Theorem 3.1 provides a semantic sense in which a definitional extension $T^+$ ``says no more'' than the original theory $T$. The models of $T^+$ are completely determined by the models of $T$.

In order to state the second result, we need to introduce some terminology. Let $\Sigma\subset\Sigma^+$ be signatures. A $\Sigma^+$-theory $T^+$ is an \textbf{extension} of a $\Sigma$-theory $T$ if $T\vDash \phi$ implies that $T^+\vDash\phi$ for every $\Sigma$-sentence $\phi$. A $\Sigma^+$-theory $T^+$ is a \textbf{conservative extension} of a $\Sigma$-theory $T$ if $T\vDash\phi$ if and only if $T^+\vDash \phi$ for every $\Sigma$-sentence $\phi$. All conservative extensions are extensions, but in general the converse does not hold. We have the following simple result about definitional extensions. 

\begin{theorem}
If $T^+$ is a definitional extension of $T$, then $T^+$ is a conservative extension of $T$.
\end{theorem}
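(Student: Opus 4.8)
The plan is to prove the two implications packed into the definition of conservative extension separately, leaning on the existence half of Theorem 3.1 for the one that has content. Throughout I would use the standard fact that whether a $\Sigma^+$-structure $A$ satisfies a $\Sigma$-sentence $\phi$ depends only on its reduct, i.e.~$A\vDash\phi$ if and only if $A|_\Sigma\vDash\phi$. This holds because evaluating a formula in a structure only ever consults the interpretations of the symbols that actually occur in it, and no symbol of $\Sigma^+-\Sigma$ occurs in a $\Sigma$-sentence.

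First I would dispatch the easy direction, that $T^+$ is an extension of $T$. Suppose $T\vDash\phi$ for a $\Sigma$-sentence $\phi$, and let $M^+$ be any model of $T^+$. Since $T\subseteq T^+$ as sets of sentences, $M^+\vDash T$, so its reduct $M^+|_\Sigma$ is a model of $T$. Because $T\vDash\phi$, we get $M^+|_\Sigma\vDash\phi$, and the reduct fact then yields $M^+\vDash\phi$. As $M^+$ was arbitrary, $T^+\vDash\phi$. Notice that this direction uses nothing special about definitional extensions beyond the set inclusion $T\subseteq T^+$.

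For the converse — the genuinely conservative direction — suppose $T^+\vDash\phi$ for a $\Sigma$-sentence $\phi$, and let $M$ be an arbitrary model of $T$. This is where Theorem 3.1 does the work: it supplies an expansion $M^+$ of $M$ that is a model of $T^+$. Since $T^+\vDash\phi$, we have $M^+\vDash\phi$; and since $M^+|_\Sigma=M$ while $\phi$ is a $\Sigma$-sentence, the reduct fact gives $M\vDash\phi$. As $M$ ranged over all models of $T$, this shows $T\vDash\phi$, completing the proof.

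The only subtle point, and the step I would flag as the crux, is the appeal to Theorem 3.1 in the previous paragraph. Without the guarantee that \emph{every} model of $T$ expands to a model of $T^+$, a sentence entailed by $T^+$ could fail to be entailed by $T$ simply because $T$ admits ``extra'' models having no counterpart upstairs. It is precisely the admissibility conditions built into the notion of a definitional extension that rule this out, which is why the full definitional structure — rather than an arbitrary extension by new axioms — is needed. I would also remark that only the existence, not the uniqueness, of the expansion supplied by Theorem 3.1 is used here.
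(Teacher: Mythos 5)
Your proof is correct and takes essentially the same approach as the paper: although the paper defers the proof of Theorem 3.2 itself to Hodges, its own proof of the Morita analogue (Theorem 4.2) is exactly this argument, deriving conservativity from the expansion theorem (Theorem 4.1 there, Theorem 3.1 here), merely phrased as a proof by contradiction rather than directly. Your observation that only the existence half of the expansion theorem is needed is also accurate and matches the paper's use of it.
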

If $T^+$ is a conservative extension of $T$, then $T^+$ entails precisely the same $\Sigma$-sentences as $T$. Theorem 3.2 therefore shows that a definitional extension $T^+$ ``says no more'' in the signature $\Sigma$ than the original theory $T$ does.

The third result shows something stronger. If $T^+$ is a definitional extension of $T$, then every $\Sigma^+$-formula $\phi(x_1,\ldots, x_n)$ can be ``translated'' into an equivalent $\Sigma$-formula $\phi^*(x_1,\ldots, x_n)$. The theory $T^+$ might use some new language that $T$ did not use, but everything that $T^+$ says with this new language can be ``translated'' back into the old language of $T$. This result captures another robust sense in which the theory $T^+$ ``says no more'' than the theory $T$. 

\begin{theorem}
Let $\Sigma\subset\Sigma^+$ be signatures and $T$ a $\Sigma$-theory. If $T^+$ is a definitional extension of $T$ to $\Sigma^+$ then for every $\Sigma^+$-formula $\phi(x_1,\ldots, x_n)$ there is a $\Sigma$-formula $\phi^*(x_1,\ldots, x_n)$ such that 
$
T^+\vDash\forall_{\sigma_1}x_1\ldots\forall_{\sigma_n}x_n(\phi(x_1,\ldots, x_n)\leftrightarrow\phi^*(x_1,\ldots, x_n))
$.
\end{theorem}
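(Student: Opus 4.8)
The plan is to proceed by induction on the structure of the $\Sigma^+$-formula $\phi$. The connectives and quantifiers are routine: if $\psi$ and $\chi$ have $\Sigma$-translations $\psi^*$ and $\chi^*$ with $T^+\vDash\forall_{\sigma_1}x_1\ldots\forall_{\sigma_n}x_n(\psi\leftrightarrow\psi^*)$ and similarly for $\chi$, then $\lnot\psi^*$, $\psi^*\land\chi^*$, $\forall_\sigma y\,\psi^*$, and so on serve as translations of $\lnot\psi$, $\psi\land\chi$, $\forall_\sigma y\,\psi$, etc., since $T^+$ proves the relevant equivalences and the Boolean operations and quantifiers preserve provable equivalence. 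So the entire weight of the argument falls on the base case of atomic $\Sigma^+$-formulas.

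For atoms I would run a secondary induction on the number of occurrences of symbols from $\Sigma^+-\Sigma$ in the atom, eliminating these symbols one at a time using the explicit definitions in $T^+$. Suppose the atom $\chi$ contains an innermost occurrence of a new function symbol $f$ of arity $\sigma_1\times\cdots\times\sigma_k\rightarrow\tau$, say the subterm $f(u_1,\ldots,u_k)$ where $u_1,\ldots,u_k$ contain no new symbols. Let $\phi_f$ be the defining formula from the explicit definition \eqref{functiondefinition} for $f$ in $T^+$, and let $w$ be a fresh variable of sort $\tau$. Replacing that occurrence yields
$$
\exists_\tau w\big(\chi'\land\phi_f(u_1,\ldots,u_k,w)\big),
$$
where $\chi'$ is $\chi$ with $f(u_1,\ldots,u_k)$ replaced by $w$. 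The admissibility condition for $\delta_f$, which $T$ entails and hence $T^+$ entails, guarantees that there is a unique such $w$ and that it is the value $f^M(u_1^M,\ldots,u_k^M)$ in any model $M$ of $T^+$; this is exactly what makes the displayed formula $T^+$-equivalent to $\chi$. The new constant symbols are handled identically using \eqref{constantdefinition} and its admissibility condition. Each such step strictly decreases the number of new function and constant symbol occurrences, so after finitely many steps we reach an atom all of whose terms are $\Sigma$-terms.

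At that point only the outermost predicate can still belong to $\Sigma^+-\Sigma$. If it is the equality symbol or a predicate of $\Sigma$, the atom is already a $\Sigma$-formula and we take $\phi^*=\phi$. If it is a new predicate $p$ with explicit definition $\forall_{\sigma_1}x_1\ldots\forall_{\sigma_m}x_m(p(x_1,\ldots,x_m)\leftrightarrow\phi_p(x_1,\ldots,x_m))$, then $p(u_1,\ldots,u_m)$ is $T^+$-equivalent to $\phi_p(u_1,\ldots,u_m)$, which is a $\Sigma$-formula since the $u_i$ are now $\Sigma$-terms. This completes the base case and hence the induction. Note that throughout, the fresh variables introduced are bound by their existential quantifiers, so the resulting $\phi^*$ has its free variables among $x_1,\ldots,x_n$, as required.

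The step I expect to be the main obstacle is the innermost-occurrence replacement for function and constant symbols. The bookkeeping must be done carefully: one must choose $w$ genuinely fresh so as not to capture any variable occurring in $\chi$ or any of the $x_i$, confirm that the replacement really does lower the symbol count, and --- most importantly --- invoke the admissibility conditions correctly to certify that the existentially quantified formula is equivalent to the original atom under $T^+$. Everything else is standard propagation of provable equivalences through the logical structure.
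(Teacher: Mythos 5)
Your proof is correct and takes essentially the standard approach: the paper does not prove Theorem 3.3 itself but defers to Hodges (pp.~58--62), whose argument is the same induction on formula complexity with atoms handled by unnesting, i.e., eliminating each occurrence of a defined function or constant symbol in favor of an existentially quantified fresh variable constrained by the defining formula, and then replacing any defined head predicate by its definiens. This is also the same technique the paper itself uses in the appendix for the Morita generalization (Theorem 4.3), where Lemma 4.2 plays exactly the role of your innermost-occurrence elimination by constructing, for each term $t$, a $\Sigma$-formula expressing $t=x$ and existentially quantifying over term values.
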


These results capture three different senses in which a definitional extension has the same expressive power as the original theory. With this in mind, we have the resources necessary to state definitional equivalence.

\begin{definition}
Let $T_1$ be a $\Sigma_1$-theory and $T_2$ be a $\Sigma_2$-theory. $T_1$ and $T_2$ are \textbf{definitionally equivalent} if there are theories $T_1^+$ and $T_2^+$ that satisfy the following three conditions:
\begin{itemize}
\item $T_1^+$ is a definitional extension of $T_1$,
\item $T_2^+$ is a definitional extension of $T_2$,
\item $T_1^+$ and $T_2^+$ are logically equivalent $\Sigma_1\cup\Sigma_2$-theories.
\end{itemize}
\end{definition}
One often says that $T_1$ and $T_2$ are definitionally equivalent if they have a ``common definitional extension.'' Theorems 3.1, 3.2, and 3.3 demonstrate a robust sense in which theories with a common definitional extension ``say the same thing,'' even though they might be formulated in different signatures. 

One trivially sees that if two theories are logically equivalent, then they are definitionally equivalent. But there are many examples of theories that are definitionally equivalent and not logically equivalent. The theory of groups formulated in the signature $\{\cdot, e\}$ is definitionally equivalent to the theory of groups formulated in the signature $\{\cdot, -1\}$. And likewise, the theory of linear orders formulated in the signature $\{<\}$ is definitionally equivalent to the theory of linear orders formulated in the signature $\{\leq\}$. Definitional equivalence is therefore a weaker criterion for theoretical equivalence than logical equivalence. It is capable of capturing a sense in which theories formulated in different signatures might nonetheless be equivalent.

\section{Morita equivalence}

Definitional equivalence, however, is \textit{in}capable of capturing any sense in which theories formulated with different sorts might be equivalent.  We have provided no way of defining new sort symbols. One can therefore easily verify that if $T_1$ and $T_2$ are definitionally equivalent, then it must be that $\Sigma_1$ and $\Sigma_2$ have the same sort symbols. There are many theories with different sort symbols, however, that one has good reason to consider equivalent.

One particularly famous example of this is Euclidean geometry. It can be formulated with only a sort of ``points'' \citep{tarski1959}, with only a sort of ``lines'' \citep{schwabhauser1975}, or with both a sort of ``points'' and a sort of ``lines'' \citep{hilbert1930}.\footnote{\cite{szczerba1977} and \citet[Proposition 4.59, Proposition 4.89]{schwabhauser1983} discuss the relationships between these formulations.} Category theory can also be formulated using different sorts. The standard formulation uses both a sort of ``objects'' and a sort of ``arrows'' \citep{eilenbergmaclane1942, eilenbergmaclane1945}. But it is well known that category theory can instead be formulated using only a sort of ``arrows'' \citep{maclane1948}.\footnote{\citet[p.~5]{freyd1964} and \citet[p.~9]{cwm} also describe this alternative formulation.} Since these formulations use different sort symbols, definitional equivalence does not capture any sense in which they are equivalent. 

In addition to these two famous examples, we have the following simple example. 
\begin{example}
\label{motivating example}
Let $\Sigma_1=\{\sigma_1, p, q\}$ and $\Sigma_2=\{\sigma_2, \sigma_3\}$ be signatures with $\sigma_1, \sigma_2$, and $\sigma_3$ sort symbols and $p$ and $q$ predicate symbols of arity $\sigma_1$. Consider the $\Sigma_1$-theory 
\begin{align*}
T_1=\big\{&\exists_{\sigma_1} x_1 p(x_1), \exists_{\sigma_1} x_2q(x_2), \forall_{\sigma_1} x_1(p(x_1)\lor q(x_1)),\\
 &\forall_{\sigma_1} x_1\lnot(p(x_1)\land q(x_1))\big\}
\end{align*}
and the $\Sigma_2$-theory $T_2=\emptyset$. Since the signatures $\Sigma_1$ and $\Sigma_2$ have different sort symbols, $T_1$ and $T_2$ are not definitionally equivalent.
\end{example}

Even though $T_1$ and $T_2$ are not definitionally equivalent, one still has good reason to consider them equivalent. The theory $T_1$ partitions everything into the things that are $p$ and the things that are $q$. Similarly, the theory $T_2$ partitions everything into the things of sort $\sigma_1$ and the things of sort $\sigma_2$. Both $T_1$ and $T_2$ say ``there are two kinds of things.'' The only difference between them is that $T_1$ uses predicates to say this, while $T_2$ uses sorts. 

These examples all show that definitional equivalence does not capture the sense in which some theories are equivalent. If one wants to capture this sense, one needs a more general criterion for theoretical equivalence than definitional equivalence. Our aim here is to introduce one such criterion. We will call it \textit{Morita equivalence}.\footnote{This criterion is already familiar in certain circles of logicians. See \cite{andreka2008}. The name ``Morita equivalence'' descends from Kiiti Morita's work on rings with equivalent categories of modules.  Two rings $R$ and $S$ are called \emph{Morita equivalent} just in case there is an equivalence $\text{Mod}(R)\cong \text{Mod}(S)$ between their categories of modules.  The notion was generalized from rings to algebraic theories by \cite{dukarm}. See also \cite{adamek}. More recently, topos theorists have defined theories to be Morita equivalent just in case their classifying toposes are equivalent \citep{johnstone}. See \cite{tsementzis} for a comparison of the topos-theoretic notion of Morita equivalence with ours.} This criterion is a natural generalization of definitional equivalence. In fact, Morita equivalence is essentially the same as definitional equivalence, except that it allows one to define new sort symbols in addition to new predicate symbols, function symbols, and constant symbols. In order to state the criterion precisely, we again need to do some work. We begin by defining the concept of a Morita extension. We then make precise the sense in which Morita equivalence is a natural generalization of definitional equivalence by proving analogues of Theorems 3.1, 3.2 and 3.3.

\subsection{Morita extensions}

As we did for predicates, functions, and constants, we need to say how to define new sorts. Let $\Sigma\subset\Sigma^+$ be signatures and consider a sort symbol $\sigma\in\Sigma^+-\Sigma$. One can define the sort $\sigma$ as a product sort, a coproduct sort, a subsort, or a quotient sort. In each case, one defines $\sigma$ using old sorts in $\Sigma$ and new function symbols in $\Sigma^+-\Sigma$. These new function symbols specify how the new sort $\sigma$ is related to the old sorts in $\Sigma$. We describe these four cases in detail.

In order to define $\sigma$ as a product sort, one needs two function symbols $\pi_1,\pi_2\in\Sigma^+-\Sigma$ with $\pi_1$ of arity $\sigma\rightarrow\sigma_1$, $\pi_2$ of arity $\sigma\rightarrow\sigma_2$, and $\sigma_1,\sigma_2\in\Sigma$. The function symbols $\pi_1$ and $\pi_2$ serve as the ``canonical projections'' associated with the product sort $\sigma$. An explicit definition of the symbols $\sigma, \pi_1$, and $\pi_2$ as a \textbf{product sort} in terms of $\Sigma$ is a $\Sigma^+$-sentence of the form
$$
\forall_{\sigma_1}x\forall_{\sigma_2} y\exists_{\sigma=1}z(\pi_1(z)=x\land\pi_2(z)=y)
$$
One should think of a product sort $\sigma$ as the sort whose elements are ordered pairs, where the first element of each pair is of sort $\sigma_1$ and the second is of sort $\sigma_2$.

One can also define $\sigma$ as a coproduct sort. One again needs two function symbols $\rho_1,\rho_2\in\Sigma^+-\Sigma$ with $\rho_1$ of arity $\sigma_1\rightarrow\sigma$, $\rho_2$ of arity $\sigma_2\rightarrow\sigma$, and $\sigma_1,\sigma_2\in\Sigma$. The function symbols $\rho_1$ and $\rho_2$ are the ``canonical injections'' associated with the coproduct sort $\sigma$. An explicit definition of the symbols $\sigma, \rho_1$, and $\rho_2$ as a \textbf{coproduct sort} in terms of $\Sigma$ is a $\Sigma^+$-sentence of the form
\begin{gather*}
\forall_\sigma z\big(\exists_{\sigma_1=1}x(\rho_1(x)=z)\lor\exists_{\sigma_2=1} y(\rho_2(y)=z)\big)
\land \forall_{\sigma_1} x\forall_{\sigma_2} y\lnot\big(\rho_1(x)=\rho_2(y)\big)
\end{gather*}
One should think of a coproduct sort $\sigma$ as the disjoint union of the elements of sorts $\sigma_1$ and $\sigma_2$.

When defining a new sort $\sigma$ as a product sort or a coproduct sort, one uses two sort symbols in $\Sigma$ and two function symbols in $\Sigma^+-\Sigma$. The next two ways of defining a new sort $\sigma$ only require one sort symbol in $\Sigma$ and one function symbol in $\Sigma^+-\Sigma$. 

In order to define $\sigma$ as a subsort, one needs a function symbol $i\in\Sigma^+-\Sigma$ of arity $\sigma\rightarrow\sigma_1$ with $\sigma_1\in\Sigma$. The function symbol $i$ is the ``canonical inclusion'' associated with the subsort $\sigma$. An explicit definition of the symbols $\sigma$ and $i$ as a \textbf{subsort} in terms of $\Sigma$ is a $\Sigma^+$-sentence of the form
\begin{gather}
\label{subsortdefinition}
\forall_{\sigma_1}x\big(\phi(x)\leftrightarrow\exists_\sigma z(i(z)=x)\big)
\land\forall_\sigma z_1\forall_\sigma z_2\big(i(z_1)=i(z_2)\rightarrow z_1=z_2\big)
\end{gather}
where $\phi(x)$ is a $\Sigma$-formula. One can think of the subsort $\sigma$ as consisting of ``the elements of sort $\sigma_1$ that are $\phi$.'' The sentence \eqref{subsortdefinition} entails the  $\Sigma$-sentence $\exists_{\sigma_1} x \phi(x)$. As before, we will call this $\Sigma$-sentence the \textbf{admissibility condition} for the definition \eqref{subsortdefinition}.

Lastly, in order to define $\sigma$ as a quotient sort one needs a function symbol $\epsilon\in\Sigma^+-\Sigma$ of arity $\sigma_1\rightarrow\sigma$ with $\sigma_1\in\Sigma$. An explicit definition of the symbols $\sigma$ and $\epsilon$ as a \textbf{quotient sort} in terms of $\Sigma$ is a $\Sigma^+$-sentence of the form
\begin{equation}
\label{quotientdefinition}
\forall_{\sigma_1} x_1\forall_{\sigma_1}x_2\big(\epsilon(x_1)=\epsilon(x_2)\leftrightarrow\phi(x_1,x_2)\big) \land\forall_\sigma z\exists_{\sigma_1}x(\epsilon(x)=z)
\end{equation}
where $\phi(x_1,x_2)$ is a $\Sigma$-formula. This sentence defines $\sigma$ as a quotient sort that is obtained by ``quotienting out'' the sort $\sigma_1$ with respect to the formula $\phi(x_1, x_2)$. The sort $\sigma$ should be thought of as the set of ``equivalence classes of elements of $\sigma_1$ with respect to the relation $\phi(x_1,x_2)$.'' The function symbol $\epsilon$ is the ``canonical projection'' that maps an element to its equivalence class. One can verify that the sentence \eqref{quotientdefinition} implies that $\phi(x_1,x_2)$ is an equivalence relation. In particular, it entails the following $\Sigma$-sentences:
$$
\begin{aligned}
&\forall_{\sigma_1}x(\phi(x,x))\\
&\forall_{\sigma_1}x_1\forall_{\sigma_1}x_2(\phi(x_1,x_2)\rightarrow\phi(x_2, x_1))\\
&\forall_{\sigma_1}x_1\forall_{\sigma_1}x_2\forall_{\sigma_1}x_3\big((\phi(x_1,x_2)\land\phi(x_2,x_3))\rightarrow\phi(x_1, x_3)\big)
\end{aligned}
$$
These $\Sigma$-sentences are the \textbf{admissibility conditions} for the definition \eqref{quotientdefinition}.

Now that we have presented the four ways of defining new sort symbols, we can define the concept of a Morita extension. A Morita extension is a natural generalization of a definitional extension. The only difference is that now one is allowed to define new sort symbols. Let $\Sigma\subset\Sigma^+$ be signatures and $T$ a $\Sigma$-theory. A \textbf{Morita extension} of $T$ to the signature $\Sigma^+$ is a $\Sigma^+$-theory
$$
T^+=T\cup\{\delta_s:s\in\Sigma^+-\Sigma\}
$$
that satisfies the following conditions. First, for each symbol $s\in\Sigma^+-\Sigma$ the sentence $\delta_s$ is an explicit definition of $s$ in terms of $\Sigma$. Second, if $\sigma\in\Sigma^+-\Sigma$ is a sort symbol and $f\in \Sigma^+-\Sigma$ is a function symbol that is used in the explicit definition of $\sigma$, then $\delta_f=\delta_\sigma$. (For example, if $\sigma$ is defined as a product sort with projections $\pi_1$ and $\pi_2$, then $\delta_\sigma=\delta_{\pi_1}=\delta_{\pi_2}$.)
And third, if $\alpha_s$ is an admissibility condition for a definition $\delta_s$, then $T\vDash\alpha_s$. 

Note that unlike a definitional extension of a theory, a Morita extension can have more sort symbols than the original theory.\footnote{Also note that if $T^+$ is a Morita extension of $T$ to $\Sigma^+$, then there are restrictions on the arities of predicates, functions, and constants in $\Sigma^+-\Sigma$. If $p\in\Sigma^+-\Sigma$ is a predicate symbol of arity $\sigma_1\times\ldots\times\sigma_n$, we immediately see that $\sigma_1,\ldots,\sigma_n\in\Sigma$. Taking a single Morita extension does not allow one to define predicate symbols that apply to sorts that are not in $\Sigma$. One must take multiple Morita extensions to do this.  Likewise, any constant symbol $c\in\Sigma^+-\Sigma$ must be of sort $\sigma\in\Sigma$. And a function symbol $f\in\Sigma^+-\Sigma$ must either have arity $\sigma_1\times\ldots\times\sigma_n\rightarrow\sigma$ with $\sigma_1,\ldots,\sigma_n,\sigma\in\Sigma$, or $f$ must be one of the function symbols that appears in the definition of a new sort symbol $\sigma\in\Sigma^+-\Sigma$.} The following is a particularly simple example of a Morita extension.

\begin{example}
\label{extensionexample}
Let $\Sigma=\{\sigma, p\}$ and $\Sigma^+=\{\sigma, \sigma^+, p, i\}$ be a signatures with $\sigma$ and $\sigma^+$ sort symbols, $p$ a predicate symbol of arity $\sigma$, and $i$ a function symbol of arity $\sigma^+\rightarrow\sigma$. Consider the $\Sigma$-theory $T=\{\exists_\sigma x p(x)\}$. The following $\Sigma^+$-sentence defines the sort symbol $\sigma^+$ as the subsort consisting of ``the elements that are $p$.''
\begin{align*}
&\forall_{\sigma}x\big(p(x)\leftrightarrow\exists_{\sigma^+} z(i(z)=x)\big)\land\forall_{\sigma^+}z_1\forall_{\sigma^+}z_2\big(i(z_1)=i(z_2)\rightarrow z_1=z_2\big)\tag{$\delta_{\sigma^+}$}
\end{align*}
The $\Sigma^+$-theory $T^+=T\cup\{\delta_{\sigma^+}\}$ is a Morita extension of $T$ to the signature $\Sigma^+$. The theory $T^+$ adds to the theory $T$ the ability to quantify over the set of ``things that are $p$.'' 
\end{example}

\subsection{Three results}

As with a definitional extension, a Morita extension ``says no more'' than the original theory. We will make this idea precise by proving analogues of Theorems 3.1, 3.2, and 3.3. These three results also demonstrate how closely related the concept of a Morita extension is to that of a definitional extension.

Theorem 3.1 generalizes in a perfectly natural way. When $T^+$ is a Morita extension of $T$, the models of $T^+$ are ``determined'' by the models of $T$.
\begin{theorem}
Let $\Sigma\subset\Sigma^+$ be signatures and $T$ a $\Sigma$-theory. If $T^+$ is a Morita extension of $T$ to $\Sigma^+$, then every model $M$ of $T$ has a unique expansion (up to isomorphism) $M^+$ that is a model of $T^+$.
\end{theorem}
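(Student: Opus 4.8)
The plan is to prove the two halves — existence of an expansion and its uniqueness up to isomorphism — separately, in each case reducing to a case analysis over the four ways a new sort may be defined. The key structural observation that makes this manageable is that in a single Morita extension every new symbol is defined purely in terms of $\Sigma$: the defining formulas $\phi$ are $\Sigma$-formulas, each new sort is built from old sorts in $\Sigma$ (a product or coproduct of two old sorts, or a subsort or quotient of a single old sort), and by the arity restrictions noted above every new predicate, function, or constant symbol that is not one of the sort-defining functions has an arity lying entirely in $\Sigma$. Consequently the new sorts carry no dependencies among themselves, and each new symbol can be interpreted directly from the given model $M$.

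For existence, I would start with a model $M$ of $T$ and build $M^+$ symbol by symbol. For each new sort $\sigma$ I interpret $M^+_\sigma$ according to its definition: as (a disjoint copy of) $M_{\sigma_1}\times M_{\sigma_2}$ with $\pi_1,\pi_2$ the projections in the product case; as $M_{\sigma_1}\sqcup M_{\sigma_2}$ with $\rho_1,\rho_2$ the injections in the coproduct case; as $\{a\in M_{\sigma_1}: M\vDash\phi[a]\}$ with $i$ the inclusion in the subsort case; and as $M_{\sigma_1}/\!\sim_\phi$ with $\epsilon$ the quotient map in the quotient case. Here the admissibility conditions do the essential work: since $M\vDash T$ and $T\vDash\alpha_s$, the subsort is nonempty and the relation $\phi$ in a quotient definition is a genuine equivalence relation, so these constructions are well defined and yield nonempty sorts. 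The remaining new symbols have arities in $\Sigma$ and are interpreted from their explicit definitions exactly as in the proof of Theorem 3.1, their admissibility conditions guaranteeing that functions and constants are well defined. One then checks that $M^+\vDash T^+$: it satisfies $T$ because $M$ does and $T$ is a $\Sigma$-theory, and it satisfies each $\delta_s$ by construction.

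For uniqueness, suppose $M^+$ and $N^+$ are two expansions of $M$ that both model $T^+$; I would exhibit an isomorphism $h\colon M^+\to N^+$ that is the identity on every old sort. On a new sort the defining axiom forces the interpretation up to a canonical bijection: the product-sort axiom makes $z\mapsto(\pi_1(z),\pi_2(z))$ a bijection onto $M_{\sigma_1}\times M_{\sigma_2}$ in both $M^+$ and $N^+$, so composing these gives $h_\sigma$; the subsort axioms make $i$ a bijection onto $\{a:M\vDash\phi[a]\}$; the quotient axioms make $\epsilon$ induce a bijection with $M_{\sigma_1}/\!\sim_\phi$; and the coproduct case is analogous with $\rho_1,\rho_2$. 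Each $h_\sigma$ so defined preserves the corresponding sort-defining function by construction. Preservation of every other symbol is then automatic: the old symbols and the new symbols of arity in $\Sigma$ receive identical interpretations in $M^+$ and $N^+$ — each being fixed by $M$ together with its explicit definition — and $h$ is the identity on the relevant old sorts.

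The main obstacle is really the bookkeeping in the uniqueness argument rather than any single hard step, and it is worth flagging why the conclusion weakens from the \emph{unique} of Theorem 3.1 to \emph{unique up to isomorphism}. The elements of a new sort are not canonically determined by $M$: a product sort can be realized by any set in bijection with $M_{\sigma_1}\times M_{\sigma_2}$ compatibly with the projections, and indeed even the existence construction must choose disjoint copies to respect the requirement that the carrier sets be pairwise disjoint. The defining axioms pin down each new sort only up to a bijection intertwining the sort-defining functions, which is exactly the content of the isomorphism $h$ built above; this is the precise sense in which the expansion is determined, and no more.
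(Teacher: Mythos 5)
Your proposal is correct and takes essentially the same approach as the paper: the existence construction (interpreting new predicates, functions, and constants via their defining $\Sigma$-formulas, and new sorts as products, disjoint unions, definable subsets, and quotients built from $M$) matches the paper's, and your uniqueness isomorphism between two expansions is exactly the paper's induced map $h^+$ specialized to the case where $h$ is the identity on $M|_\Sigma$. The only organizational difference is that the paper obtains uniqueness from a standalone lemma constructing $h^+$ from an arbitrary elementary embedding (machinery it reuses later to prove the functor $\Pi$ is full), whereas you inline the identity case directly; your explicit care about taking disjoint copies of the new sort carriers is a point the paper itself glosses over.
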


Before proving Theorem 4.1, we introduce some notation and prove a lemma. Suppose that a $\Sigma^+$-theory $T^+$ is a Morita extension of a $\Sigma$-theory $T$. Let $M$ and $N$ be models of $T^+$ with $h:M|_\Sigma\rightarrow N|_\Sigma$ an elementary embedding between the $\Sigma$-structures $M|_\Sigma$ and $N|_\Sigma$. The elementary embedding $h$ naturally induces a map $h^+:M\rightarrow N$
between the $\Sigma^+$-structures $M$ and $N$.

We know that $h$ is a family of maps $h_\sigma: M_\sigma\rightarrow N_\sigma$ for each sort $\sigma\in\Sigma$. In order to describe $h^+$ we need to describe the map $h^+_\sigma:M_\sigma\rightarrow N_\sigma$ for each sort $\sigma\in\Sigma^+$. If $\sigma\in\Sigma$, we simply let $h_\sigma^+=h_\sigma$.  On the other hand, when $\sigma\in\Sigma^+-\Sigma$, there are four cases to consider.  We describe $h^+_\sigma$ in the cases where the theory $T^+$ defines $\sigma$ as a product sort or a subsort. The coproduct and quotient sort cases are described analogously.

First, suppose that $T^+$ defines $\sigma$ as a product sort. Let $\pi_1,\pi_2\in\Sigma^+$ be the projections of arity $\sigma\rightarrow\sigma_1$ and $\sigma\rightarrow\sigma_2$ with $\sigma_1,\sigma_2\in\Sigma$. The definition of the function $h^+_\sigma$ is suggested by the following diagram.
$$
\begin{tikzpicture}
\node(1) {$M_\sigma$};
\node(2) [below right of=1] {$M_{\sigma_1}$};
\node(3) [right of =2] {$N_{\sigma_1}$};
\node(4) [above right of=3] {$N_{\sigma}$};
\node(5) [above right of=1] {$M_{\sigma_2}$};
\node(6) [right of=5] {$N_{\sigma_2}$};
\draw[->, dotted] (1) to node  {$h^+_\sigma$} (4);
\draw[->] (1) to node [swap] {$\pi_1^M$} (2);
\draw[->] (2) to node {$h^+_{\sigma_1}$} (3);
\draw[->] (4) to node {$\pi_1^N$} (3);
\draw[->] (1) to node {$\pi_2^M$} (5);
\draw[->] (5) to node {$h^+_{\sigma_2}$} (6);
\draw[->] (4) to node [swap] {$\pi_2^N$} (6);
\end{tikzpicture}
$$
Let $m\in M_\sigma$. We define $h^+_\sigma(m)$ to be the unique $n\in N_\sigma$ that satisfies both $\pi_1^N(n)=h^+_{\sigma_1}\circ\pi_1^M(m)$ and $\pi_2^N(n)=h^+_{\sigma_2}\circ\pi_2^M(m)$. We know that such an $n$ exists and is unique because $N$ is a model of $T^+$ and $T^+$ defines the symbols $\sigma$, $\pi_1$, and $\pi_2$ to be a product sort. One can verify that this definition of $h^+_\sigma$ makes the above diagram commute. 

Suppose, on the other hand, that $T^+$ defines $\sigma$ as the subsort of ``elements of sort $\sigma_1$ that are $\phi$.'' Let $i\in\Sigma^+$ be the inclusion map of arity $\sigma\rightarrow\sigma_1$ with $\sigma_1\in\Sigma$. As above, the definition of $h^+_\sigma$ is suggested by the following diagram.
$$
\begin{tikzpicture}
\node(1) {$M_\sigma$};
\node(2) [below right of=1] {$M_{\sigma_1}$};
\node(3) [right of =2] {$N_{\sigma_1}$};
\node(4) [above right of=3] {$N_{\sigma}$};
\draw[->, dotted] (1) to node  {$h^+_\sigma$} (4);
\draw[->] (1) to node [swap] {$i^M$} (2);
\draw[->] (2) to node {$h^+_{\sigma_1}$} (3);
\draw[->] (4) to node {$i^N$} (3);
\end{tikzpicture}
$$
Let $m\in M_\sigma$. We see that following implications hold:
\begin{align*}
M\vDash\phi[i^M(m)]&\Rightarrow M|_\Sigma\vDash\phi[i^M(m)]\\
&\Rightarrow N|_\Sigma\vDash\phi[h^+_{\sigma_1}( i^M(m))]\Rightarrow N\vDash\phi[h^+_{\sigma_1}( i^M(m))]
\end{align*}
The first and third implications hold since $\phi(x)$ is a $\Sigma$-formula, and the second holds because $h_{\sigma_1}=h^+_{\sigma_1}$ and $h$ is an elementary embedding. $T^+$ defines the symbols $i$ and $\sigma$ as a subsort and $M$ is a model of $T^+$, so it must be that $M\vDash \phi[i^M(m)]$. By the above implications, we see that $N\vDash\phi[h^+_{\sigma_1}( i^M(m))]$. Since $N$ is also a model of $T^+$, there is a unique $n\in N_\sigma$ that satisfies $i^N(n)=h^+_{\sigma_1}(i^M(m))$. We define $h^+_\sigma(m)=n$. This definition of $h^+_\sigma$ again makes the above diagram commute. 

When $T^+$ defines $\sigma$ as a coproduct sort or a quotient sort one describes the map $h^+_\sigma$ analogously. 
For the purposes of proving Theorem 4.1, we need the following simple lemma about this map $h^+$.

\begin{lemma}
If $h:M|_\Sigma\rightarrow N|_\Sigma$ is an isomorphism, then $h^+:M\rightarrow N$ is an isomorphism.
\end{lemma}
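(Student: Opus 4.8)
The plan is to verify directly that $h^+$ meets the three defining conditions of an isomorphism of $\Sigma^+$-structures: that each $h^+_\sigma$ is a bijection, and that $h^+$ respects every predicate, function, and constant symbol of $\Sigma^+$. The observation organizing the proof is that $h^+$ agrees with the given isomorphism $h$ on every old sort $\sigma\in\Sigma$, so that the entire burden falls on the new symbols of $\Sigma^+-\Sigma$. I would split these into (i) the new sort symbols together with their associated ``structural'' function symbols ($\pi_1,\pi_2$; $\rho_1,\rho_2$; $i$; or $\epsilon$), and (ii) the remaining new predicate, function, and constant symbols, whose arities (by the footnoted remark) involve only sorts already in $\Sigma$.

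For the symbols of type (ii) the work is easy and reduces to the fact that every isomorphism is an elementary embedding. Each such symbol $s$ carries an explicit definition $\delta_s$ phrased with a $\Sigma$-formula, and since $M,N\vDash T^+$ the interpretations $s^M$ and $s^N$ are pinned down by that $\Sigma$-formula. Because $h$ is an isomorphism between the reducts $M|_\Sigma$ and $N|_\Sigma$, it preserves all $\Sigma$-formulas, so I can conclude that $h$ — and hence $h^+$ on old-sort elements — respects $s$. This is essentially the argument already rehearsed in the text for the subsort case.

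The heart of the argument is type (i). Function compatibility here comes for free: each $h^+_\sigma$ was defined precisely so that the displayed diagrams commute, which is exactly the assertion that $h^+$ respects $\pi_1,\pi_2$ (resp.\ $\rho_1,\rho_2$, $i$, $\epsilon$). What remains is bijectivity of $h^+_\sigma$ for each new sort $\sigma$, and here I would use that $M$ and $N$ model the relevant definition $\delta_\sigma$. For a product sort this says $(\pi_1^M,\pi_2^M)\colon M_\sigma\to M_{\sigma_1}\times M_{\sigma_2}$ is a bijection, and likewise for $N$, so $h^+_\sigma$ is just the conjugate of the bijection $h_{\sigma_1}\times h_{\sigma_2}$ and is therefore a bijection (using $h^+_{\sigma_i}=h_{\sigma_i}$ since $\sigma_i\in\Sigma$). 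For a subsort, $i^M$ identifies $M_\sigma$ with $\{x\in M_{\sigma_1}:M\vDash\phi[x]\}$, and since $h_{\sigma_1}$ restricts to a bijection between the two $\phi$-sets (again because $h$ preserves the $\Sigma$-formula $\phi$), $h^+_\sigma$ is a bijection; the coproduct and quotient cases run analogously. A uniform and perhaps cleaner alternative I would consider is to apply the whole construction to the isomorphism $h^{-1}\colon N|_\Sigma\to M|_\Sigma$, obtaining $(h^{-1})^+\colon N\to M$, and then to use the uniqueness clauses built into each sort definition to show that $(h^{-1})^+_\sigma$ and $h^+_\sigma$ are mutually inverse on every sort at once.

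I expect the main obstacle to be bijectivity of $h^+_\sigma$ in the quotient and coproduct cases, where surjectivity and well-definedness require a careful joint appeal to $M,N\vDash\delta_\sigma$ (so that $\epsilon^M$, resp.\ $\rho_1^M,\rho_2^M$, genuinely present $M_\sigma$ as a quotient, resp.\ a disjoint union) and to $h$'s preservation of the defining $\Sigma$-formula $\phi$; the product and subsort cases, already sketched in the text, are comparatively routine.
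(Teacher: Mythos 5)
Your proposal is correct and follows essentially the same route as the paper's proof: both reduce the problem to checking bijectivity of $h^+_\sigma$ on new sorts, preservation of the structural function symbols via the commutativity of the defining diagrams, and preservation of the remaining new symbols via their explicit $\Sigma$-definitions together with the fact that the isomorphism $h$ preserves $\Sigma$-formulas. Your write-up actually supplies more detail than the paper does on the bijectivity step (the conjugation argument for product sorts, restriction to the $\phi$-sets for subsorts), which the paper leaves as ``one can verify.''
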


\begin{proof}
We know that $h_\sigma:M_\sigma\rightarrow N_\sigma$ is a bijection for each $\sigma\in\Sigma$. Using this fact and the definition of $h^+$, one can verify that $h^+_\sigma:M_\sigma\rightarrow N_\sigma$ is a bijection for each sort $\sigma\in\Sigma^+$. So $h^+$ is a family of bijections. And furthermore, the commutativity of the above diagrams implies that $h^+$ preserves any function symbols 
that are used to define new sorts.

It only remains to check that $h^+$ preserves predicates, functions, and constants that have arities and sorts in $\Sigma$. Since $h:M|_\Sigma\rightarrow N|_\Sigma$ is a isomorphism, we know that $h^+$ preserves the symbols in $\Sigma$. So let $p\in\Sigma^+-\Sigma$ be a predicate symbol of arity $\sigma_1\times\ldots\times\sigma_n$ with $\sigma_1,\ldots,\sigma_n\in\Sigma$. There must be a $\Sigma$-formula $\phi(x_1,\ldots,x_n)$ such that $T^+\vDash\forall_{\sigma_1}x_1\ldots\forall_{\sigma_n}x_n(p(x_1,\ldots,x_n)\leftrightarrow\phi(x_1,\ldots,x_n))$. We know that $h:M|_\Sigma\rightarrow N|_\Sigma$ is an elementary embedding, so in particular it preserves the formula $\phi(x_1,\ldots, x_n)$. This implies that $(m_1,\ldots,m_n)\in p^M$ if and only if $(h_{\sigma_1}(m_1),\ldots,h_{\sigma_n}(m_n))\in p^N$. Since $h^+_{\sigma_i}=h_{\sigma_i}$ for each $i=1,\ldots, n$, it must be that $h^+$ also preserves the predicate $p$. An analogous argument demonstrates that $h^+$ preserves functions and constants.
\end{proof}

We now turn to the proof of Theorem 4.1. 
\begin{proof}[Proof of Theorem 4.1]
Let $M$ be a model of $T$. First note that if $M^+$ exists, then it is unique up to isomorphism. For if $N$ is a model of $T^+$ with $N|_\Sigma=M$, then by letting $h$ be the identity map (which is an isomorphism) Lemma 4.1 implies that $M^+\cong N$. We need only define the $\Sigma^+$-structure $M^+$. To guarantee that $M^+$ is an expansion of $M$ we interpret every symbol in $\Sigma$ the same way that $M$ does. We need to say how the symbols in $\Sigma^+-\Sigma$ are interpreted. There are a number of cases to consider.

Suppose that $p\in\Sigma^+-\Sigma$ is a predicate symbol of arity $\sigma_1\times\ldots\times\sigma_n$ with $\sigma_1,\ldots,\sigma_n\in\Sigma$. There must be a $\Sigma$-formula $\phi(x_1,\ldots, x_n)$ such that $T^+\vDash\forall_{\sigma_1}x_1\ldots\forall_{\sigma_n}x_n(p(x_1,\ldots, x_n)\leftrightarrow\phi(x_1,\ldots, x_n))$. We define the interpretation of the symbol $p$ in $M^+$ by letting $(a_1,\ldots, a_n)\in p^{M^+}$ if and only if $M\vDash\phi[a_1,\ldots, a_n]$. It is easy to see that this definition of $p^A$ implies that $M^+\vDash\delta_p$. The cases of function and constant symbols are handled similarly.

Let $\sigma\in\Sigma^+-\Sigma$ be a sort symbol. We describe the cases where $T^+$ defines $\sigma$ as a product sort or a subsort. The coproduct and quotient sort cases follow analogously. Suppose first that $\sigma$ is defined as a product sort with $\pi_1$ and $\pi_2$ the projections of arity $\sigma\rightarrow\sigma_1$ and $\sigma\rightarrow\sigma_2$, respectively. We define $M_\sigma^+=M^+_{\sigma_1}\times M^+_{\sigma_2}$ with $\pi_1^{M^+}:M^+_\sigma\rightarrow M^+_{\sigma_1}$ and $\pi_2^{M^+}:M^+_\sigma\rightarrow M^+_{\sigma_2}$ the canonical projections. One can easily verify that $M^+\vDash\delta_\sigma$. On the other hand, suppose that $\sigma$ is defined as a subsort with defining $\Sigma$-formula $\phi(x)$ and inclusion $i$ of arity $\sigma\rightarrow\sigma_1$. We define $M_\sigma^+=\{a\in M_{\sigma_1}: M\vDash\phi[a]\}$ with $i^{M^+}:M_\sigma^+\rightarrow M^+_{\sigma_1}$ the inclusion map. One can again verify that $M^+\vDash\delta_\sigma$. 
\end{proof}

We have shown that the exact analogue of Theorem 3.1 holds for Morita extensions. Theorem 3.2 also generalizes in a perfectly natural way. Indeed, the generalization follows as a simple corollary to Theorem 4.1.

\begin{theorem}
If $T^+$ is a Morita extension of $T$, then $T^+$ is a conservative extension of $T$.
\end{theorem}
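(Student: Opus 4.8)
The plan is to read off the result directly from Theorem 4.1. Recall that conservativity asks for two things: that $T^+$ is an \emph{extension} of $T$, i.e.\ $T\vDash\phi$ implies $T^+\vDash\phi$ for every $\Sigma$-sentence $\phi$, and the converse implication $T^+\vDash\phi\Rightarrow T\vDash\phi$. The first half is essentially immediate. Since $T^+=T\cup\{\delta_s:s\in\Sigma^+-\Sigma\}$, we have $T\subseteq T^+$; hence if $N$ is any model of $T^+$ it satisfies every axiom of $T$, and because those axioms are $\Sigma$-sentences, the reduct $N|_\Sigma$ is a model of $T$. So if $T\vDash\phi$ for a $\Sigma$-sentence $\phi$, then $N|_\Sigma\vDash\phi$, and since $\phi$ is a $\Sigma$-sentence this gives $N\vDash\phi$. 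As $N$ was an arbitrary model of $T^+$, we conclude $T^+\vDash\phi$.

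For the substantive direction I would argue contrapositively (or directly, equally easily). Suppose $\phi$ is a $\Sigma$-sentence with $T^+\vDash\phi$, and let $M$ be an arbitrary model of $T$. By Theorem 4.1, $M$ has an expansion $M^+$ that is a model of $T^+$. Since $T^+\vDash\phi$ we get $M^+\vDash\phi$. Now $M^+$ is an expansion of $M$, so $M^+|_\Sigma=M$, and because $\phi$ is a $\Sigma$-sentence its truth value is insensitive to the interpretations of the symbols in $\Sigma^+-\Sigma$; hence $M\vDash\phi$. As $M$ ranged over all models of $T$, we obtain $T\vDash\phi$, which completes the equivalence and hence the proof of conservativity.

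The only technical point requiring care—though it is entirely routine—is the fact that a $\Sigma$-sentence is satisfied in a $\Sigma^+$-structure precisely when it is satisfied in its reduct to $\Sigma$. This follows by a straightforward induction on the structure of $\phi$, using that a structure and its reduct interpret every symbol of $\Sigma$ identically and that a $\Sigma$-sentence mentions no symbol outside $\Sigma$. I do not expect any genuine obstacle: the entire weight of the argument rests on the \emph{existence} half of Theorem 4.1, namely that every model of $T$ expands to a model of $T^+$. Once that expansion is available, conservativity drops out immediately, which is exactly why the result is advertised as a corollary. The uniqueness-up-to-isomorphism clause of Theorem 4.1 plays no role here and can be ignored.
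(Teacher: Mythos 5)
Your proof is correct and is essentially the paper's own argument: both derive conservativity from the existence half of Theorem 4.1 (every model of $T$ expands to a model of $T^+$), with your version phrased directly where the paper argues by contradiction, a purely cosmetic difference. Your explicit treatment of the easy extension direction and of the fact that a $\Sigma$-sentence holds in a $\Sigma^+$-structure iff it holds in its reduct fills in details the paper leaves to the reader.
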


\begin{proof}
Suppose that $T^+$ is not a conservative extension of $T$. One can easily see that $T\vDash\phi$ implies that $T^+\vDash\phi$ for every $\Sigma$-sentence $\phi$. So there must be some $\Sigma$-sentence $\phi$ such that $T^+\vDash\phi$, but $T\not\vDash\phi$. This implies that there is a model $M$ of $T$ such that $M\vDash\lnot\phi$. This model $M$ has no expansion that is a model of $T^+$ since $T^+\vDash\phi$, contradicting Theorem 4.1.
\end{proof}

Theorems 3.1 and 3.2 therefore generalize naturally from definitional extensions to Morita extensions. In order to generalize Theorem 3.3, however, we need to do some work. Theorem 3.3 said that if $T^+$ is a definitional extension of $T$ to $\Sigma^+$, then for every $\Sigma^+$-formula $\phi(x_1,\ldots, x_n)$ there is a corresponding formula $\phi^*(x_1,\ldots, x_n)$ that is equivalent to $\phi(x_1,\ldots, x_n)$ according to the theory $T^+$. The following example demonstrates that this result does not generalize to the case of Morita extensions in a perfectly straightforward manner. 

\begin{example}
Recall the theories $T$ and $T^+$ from Example \ref{extensionexample} and consider the $\Sigma^+$-formula $\phi(x, z)$ defined by $i(z)=x$. One can easily see that there is no $\Sigma$-formula $\phi^*(x, z)$ that is equivalent to $\phi(x, z)$ according to the theory $T^+$. Indeed, the variable $z$ cannot appear in any $\Sigma$-formula since it is of sort $\sigma^+\in\Sigma^+-\Sigma$. A $\Sigma$-formula simply cannot say how variables with sorts in $\Sigma$ relate to variables with sorts in $\Sigma^+$.
\end{example}

In order to generalize Theorem 3.3, therefore, we need a way of specifying how variables with sorts in $\Sigma^+-\Sigma$ relate to variables with sorts in $\Sigma$. We do this by defining the concept of a ``code.''\footnote{One can compare this concept with the one employed by \cite{szczerba1977}.} Let $\Sigma\subset\Sigma^+$ be signatures with $T$ a $\Sigma$-theory and $T^+$ a Morita extension of $T$ to $\Sigma^+$. A \textbf{code} for the variables $x_1,\ldots, x_n$ of sorts $\sigma_1,\ldots, \sigma_n\in\Sigma^+-\Sigma$ is a $\Sigma^+$-formula
$$
\xi_1(x_1, y_{11}, y_{12})\land\ldots\land\xi_n(x_n, y_{n1}, y_{n2})
$$
where the conjuncts $\xi_i$ are defined as follows. Suppose that $T^+$ defines $\sigma_i$ as a product sort with $\pi_1$ and $\pi_2$ the projections of arity $\sigma_i\rightarrow\sigma_{i1}$ and $\sigma_i\rightarrow\sigma_{i2}$. The conjunct $\xi_i(x_i, y_{i1}, y_{i2})$ is then the $\Sigma^+$-formula $\pi_1(x_i)=y_{i1}\land\pi_2(x_i)=y_{i2},$ where $y_{i1}$ and $y_{i2}$ are variables of sorts $\sigma_{i1},\sigma_{i2}\in\Sigma$. On the other hand, suppose that $T^+$ defines $\sigma_i$ as a coproduct sort with injections $\rho_1$ and $\rho_2$ of arity $\sigma_{i1}\rightarrow\sigma_i$ and $\sigma_{i2}\rightarrow\sigma_i$. Then the conjunct $\xi_i$ is either the $\Sigma^+$-formula $\rho_1(y_{i1})=x_i$ or the $\Sigma^+$-formula $\rho_2(y_{i2})=x_i$, where $y_{i1}$ and $y_{i2}$ are again variables of sorts $\sigma_{i1},\sigma_{i2}\in\Sigma$.

The subsort and quotient sort cases are handled analogously. Suppose that $T^+$ defines $\sigma_i$ as a subsort with $i$ the inclusion map of arity $\sigma_i\rightarrow\sigma_{i1}$. Then the conjunct $\xi_i$ is the $\Sigma^+$-formula $i(x_i)=y_{i1}$, where $y_{i1}$ is a variable of sort $\sigma_{i1}\in\Sigma$. And finally, suppose that $T^+$ defines $\sigma_i$ as a quotient sort with $\epsilon$ the projection of arity $\sigma_{i1}\rightarrow\sigma_i$. The conjunct $\xi_i$ is then the $\Sigma^+$-formula $\epsilon(y_{i1})=x_i$, where $y_{i1}$ is again a variable of sort $\sigma_{i1}\in\Sigma$. Given the empty sequence of variables, we let the \textbf{empty code} be the tautology $\exists_\sigma x(x=x)$, where $\sigma\in\Sigma$ is a sort symbol. 

We will use the notation $\xi(x_1, \ldots, y_{n2})$ to denote the code $\xi_1(x_1,y_{11}, y_{12})\land\ldots\land\xi_n(x_n, y_{n1}, y_{n2})$ for the variables $x_1,\ldots, x_n$. Note that the variables $y_{i1}$ and $y_{i2}$ have sorts in $\Sigma$ for each $i=1,\ldots, n$. One should think of a code $\xi(x_1,\ldots, y_{n2})$ for $x_1,\ldots, x_n$ as encoding one way that the variables $x_1,\ldots, x_n$ with sorts in $\Sigma^+-\Sigma$ might be related to variables $y_{11}, \ldots, y_{n2}$ that have sorts in $\Sigma$. One additional piece of notation will be useful in what follows. Given a $\Sigma^+$-formula $\phi$, we will write $\phi(x_1,\ldots, x_n, \overline{x}_1, \ldots, \overline{x}_m)$ to indicate that the variables $x_1,\ldots, x_n$ have sorts $\sigma_1,\ldots, \sigma_n\in\Sigma^+-\Sigma$ and that the variables $\overline{x}_1,\ldots, \overline{x}_m$ have sorts $\overline{\sigma_1},\ldots, \overline{\sigma}_m\in\Sigma$. 

We can now state our generalization of Theorem 3.3. One proves this result by induction on the complexity of $\phi(x_1,\ldots, x_n)$. The proof has been placed in an appendix. 

\begin{theorem}
Let $\Sigma\subset\Sigma^+$ be signatures and $T$ a $\Sigma$-theory. Suppose that $T^+$ is a Morita extension of $T$ to $\Sigma^+$ and that $\phi(x_1,\ldots, x_n, \overline{x}_1,\ldots, \overline{x}_m)$ is a $\Sigma^+$-formula. Then for every code $\xi(x_1,\ldots, y_{n2})$ for the variables $x_1,\ldots, x_n$ there is a $\Sigma$-formula $\phi^*(\overline{x}_1,\ldots, \overline{x}_m, y_{11}, \ldots, y_{n2})$ such that 
\begin{align*}
T^+\vDash \forall_{\sigma_1} x_1\ldots&\forall_{\sigma_n} x_n\forall_{\overline{\sigma}_1} \overline{x}_1\ldots\forall_{\overline{\sigma}_m}\overline{x}_m\forall_{\sigma_{11}} y_{11}\ldots\forall_{\sigma_{n2}} y_{n2}\big(\xi(x_1,\ldots, y_{n2})\rightarrow\\
&(\phi(x_1,\ldots x_n, \overline{x}_1,\ldots, \overline{x}_m)\leftrightarrow\phi^*(\overline{x}_1,\ldots,\overline{x}_m, y_{11},\ldots, y_{n2}))\big)
\end{align*}
\end{theorem}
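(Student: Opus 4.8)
The plan is to prove the statement by induction on the complexity of the $\Sigma^+$-formula $\phi$, establishing the claim at each stage for \emph{every} code $\xi$ for the free variables $x_1,\ldots,x_n$ of sorts in $\Sigma^+-\Sigma$. Allowing the code to vary is what makes the induction go through, since the inductive hypothesis can then be applied to subformulas together with a suitably \emph{extended} code, which is exactly what the quantifier step needs. Writing $\phi^*$ for the $\Sigma$-formula produced from $\phi$ and $\xi$, the Boolean cases are immediate: one sets $(\lnot\psi)^*=\lnot\psi^*$, $(\psi\land\chi)^*=\psi^*\land\chi^*$, and so on, and checks that these translations remain equivalent to the originals modulo $\xi$ in every model of $T^+$.

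The first substantive case is quantification over a sort. If $\phi$ is $\exists_\sigma y\,\psi$ (or $\forall_\sigma y\,\psi$) with $\sigma\in\Sigma$, the quantifier is kept in place and the inductive hypothesis is applied to $\psi$, treating $y$ as one of the old-sort variables $\overline{x}_j$. If instead $\sigma\in\Sigma^+-\Sigma$, I would replace the quantifier over $\sigma$ by quantifiers over the old sorts used to define $\sigma$, adjoin a matching conjunct for $y$ to the code, and invoke the inductive hypothesis on $\psi$ with this extended code. Concretely: for a product sort one replaces $\exists_\sigma y$ by $\exists_{\sigma_1}w_1\exists_{\sigma_2}w_2$ with code conjunct $\pi_1(y)=w_1\land\pi_2(y)=w_2$; for a subsort one replaces it by $\exists_{\sigma_1}w_1$ guarded by the subsort's defining $\Sigma$-formula, with code conjunct $i(y)=w_1$; for a quotient sort one uses a single $\exists_{\sigma_1}w_1$ with code conjunct $\epsilon(w_1)=y$; and for a coproduct sort one produces a \emph{disjunction} of two translations, one per injection, since the code offers two choices of conjunct for a variable of that sort. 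In each case the relevant defining sentence — the uniqueness clause for products, surjectivity of $\epsilon$ for quotients, and the image/injectivity clauses for subsorts and coproducts — guarantees that ranging over $y$ of sort $\sigma$ is matched exactly by ranging over its old-sort representatives, so the equivalence modulo the code is preserved.

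The base case, where $\phi$ is a $\Sigma^+$-atom, is where the real work lies and is the step I expect to be the main obstacle. Here I would eliminate every symbol of $\Sigma^+-\Sigma$ from the atom in favor of $\Sigma$-material, reading the substitutions off $\xi$ and the defining sentences. New predicate, function, and constant symbols whose arities already lie in $\Sigma$ are removed exactly as in the proof of Theorem 3.3 (pulling out a function application with a guarded existential and substituting its defining formula), so one may assume the only remaining foreign symbols are the structure maps of new sorts. Their occurrences are tightly constrained: a projection or inclusion can only be applied to a variable $x_i$ of product or subsort type, and such occurrences are rewritten directly through the code ($\pi_1(x_i)\mapsto y_{i1}$, $i(x_i)\mapsto y_{i1}$, and so on), turning every old-sort term into a genuine $\Sigma$-term. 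The delicate sub-case is an equation $s=t$ between terms of a \emph{new} sort: here I would use the defining sentence for that sort to convert equality into an old-sort condition — two product elements are equal iff their projections agree, two subsort elements iff their images agree under $i$, two quotient classes iff the defining relation holds of their representatives, and two coproduct elements according to the injectivity-and-disjointness clause — and then translate that condition through the code.

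The remainder of the argument is the routine-but-lengthy verification that each rewrite preserves truth modulo $\xi$ in every model of $T^+$, invoking in each instance precisely the defining sentence (and, where relevant, its admissibility condition) that $T^+$ supplies for the sort or symbol being eliminated. The conceptual obstacle is organizing the atomic case so that all four sort constructions, the two ways a structure map can occur (inside an old-sort term versus inside an equation between new-sort terms), and the Theorem-3.3-style elimination of definitional symbols are treated uniformly; once the atomic case is secured, the inductive step assembles the full translation without further difficulty.
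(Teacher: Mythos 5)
Your proposal is correct and follows the same overall strategy as the paper: induction on the complexity of $\phi$, carried out relative to an arbitrary code, with the code extended by a fresh conjunct whenever a quantifier over a sort in $\Sigma^+-\Sigma$ is encountered (and, for coproduct sorts, branching over the two possible conjuncts — your disjunction for $\exists$ is the dual of the paper's conjunction for $\forall$). Your handling of the subsort quantifier, with the defining formula as a guard, is also exactly right, and is a detail the paper leaves implicit under ``the subsort and quotient cases follow analogously.'' The one place where your organization differs is the atomic case, which you correctly flag as the real work: you propose a three-stage rewriting (Theorem-3.3-style unnesting of defined symbols, code-substitution for structure maps applied to variables, and per-sort equality criteria for equations between new-sort terms), whereas the paper factors all of this through a single lemma (its Lemma 4.2), proved by induction on term complexity: for every $\Sigma^+$-term $t$ and every code, there is a $\Sigma$-formula $\phi_t$ expressing ``$t=x$'' modulo the code, where $x$ is a fresh variable of the same sort as $t$. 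With that lemma in hand, every atom is translated uniformly — for $t=s$ of old sort one takes $\exists_\sigma x(\phi_t\land\phi_s)$, and for $t=s$ of new sort one extends the code with representatives $v_1,v_2$ (or $v$) for the fresh variable $x$ and existentially quantifies them — which subsumes your case-by-case equality criteria and automatically handles nested occurrences such as $\epsilon\bigl(f(\pi_1(x_j))\bigr)$ without having to sequence unnesting and code-rewriting by hand. So the graph-of-terms lemma is precisely the uniform device that resolves the ``conceptual obstacle'' you identify; your sketch would go through, but making it rigorous essentially amounts to proving that lemma.
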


The idea behind Theorem 4.3 is simple. Although one might not initially be able to translate a $\Sigma^+$-formula $\phi$ into an equivalent $\Sigma$-formula $\phi^*$, such a translation is possible after one specifies how the variables in $\phi$ with sorts in $\Sigma^+-\Sigma$ are related to variables with sorts in $\Sigma$. Theorem 4.3 has the following immediate corollary.

\begin{corollary}
Let $\Sigma\subset\Sigma^+$ be signatures and $T$ a $\Sigma$-theory. If $T^+$ is a Morita extension of $T$ to $\Sigma^+$, then for every $\Sigma^+$-sentence $\phi$ there is a $\Sigma$-sentence $\phi^*$ such that $T^+\vDash\phi\leftrightarrow\phi^*$.
\end{corollary}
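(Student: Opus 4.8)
The plan is to derive this as a direct specialization of Theorem 4.3 to the degenerate case in which $\phi$ has no free variables of either kind. A $\Sigma^+$-sentence is precisely a $\Sigma^+$-formula $\phi(x_1,\ldots,x_n,\overline{x}_1,\ldots,\overline{x}_m)$ in which both blocks of variables are empty, i.e.\ $n=m=0$. So I would begin by observing that Theorem 4.3 applies to the sentence $\phi$ under this choice of (empty) variable sequences.

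Next I would invoke the empty-code convention. When $n=0$ the only code for the empty sequence $x_1,\ldots,x_n$ is the \emph{empty code}, which the paper defines to be $\exists_\sigma x(x=x)$ for some sort symbol $\sigma\in\Sigma$. Applying Theorem 4.3 with this code produces a $\Sigma$-formula $\phi^*(\overline{x}_1,\ldots,\overline{x}_m,y_{11},\ldots,y_{n2})$; but since $n=m=0$ this formula has no free variables and is therefore a $\Sigma$-sentence, as required by the statement. With both variable blocks empty, the leading string of universal quantifiers in the conclusion of Theorem 4.3 is also empty, so that conclusion reduces to
$$
T^+\vDash\big(\exists_\sigma x(x=x)\big)\rightarrow(\phi\leftrightarrow\phi^*).
$$

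Finally I would discharge the hypothesis $\exists_\sigma x(x=x)$. Because every $\Sigma^+$-structure interprets the sort $\sigma\in\Sigma$ as a nonempty set, this sentence holds in every model of $T^+$, so it may be detached from the implication, yielding $T^+\vDash\phi\leftrightarrow\phi^*$, which is exactly the claim. There is no genuine obstacle here: all the substantive work lives in Theorem 4.3, and the empty-code convention was set up precisely so that the sentence case falls out as its degenerate instance. The only point requiring a moment's care is confirming that the empty code is $\Sigma$-valid so that it can be detached, and this follows immediately from the standing assumption that every sort is interpreted as a nonempty set.
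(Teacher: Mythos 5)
Your proposal is correct and follows essentially the same route as the paper's own proof: both apply Theorem 4.3 to the sentence $\phi$ with the empty code and then detach the code because it holds in every model (the paper calls it a tautology; you justify this via the nonemptiness of sort interpretations). Your version merely spells out the bookkeeping about empty variable blocks in slightly more detail.
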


\begin{proof}
Let $\phi$ be a $\Sigma^+$-sentence and consider the empty code $\xi$. Theorem 4.3 implies that there is a $\Sigma$-sentence $\phi^*$ such that $T^+\vDash \xi\rightarrow (\phi\leftrightarrow\phi^*)$. Since $\xi$ is a tautology we trivially have that $T^+\vDash \phi\leftrightarrow\phi^*$.
\end{proof}

Theorems 4.1, 4.2, and 4.3 capture different senses in which a Morita extension of a theory ``says no more'' than the original theory. The definition of Morita equivalence is exactly analogous to definitional equivalence.

\begin{definition}
Let $T_1$ be a $\Sigma_1$-theory and $T_2$ a $\Sigma_2$-theory. $T_1$ and $T_2$ are \textbf{Morita equivalent} if there are theories $T_1^1, \ldots, T_1^n$ and $T_2^1,\ldots, T_2^m$ that satisfy the following three conditions:
\begin{itemize}
\item Each theory $T_1^{i+1}$ is a Morita extension of $T_1^{i}$,
\item Each theory $T_2^{i+1}$ is a Morita extension of $T_2^i$,
\item $T_1^n$ and $T_2^m$ are logically equivalent $\Sigma$-theories with $\Sigma_1\cup\Sigma_2\subset\Sigma$.
\end{itemize}
\end{definition}

Two theories are Morita equivalent if they have a ``common Morita extension.'' The situation can be pictured as follows, where each arrow in the figure indicates a Morita extension.
\begin{center}
\includegraphics{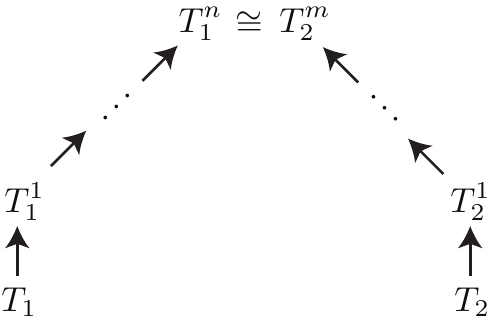}
\end{center}

At first glance, Morita equivalence might strike one as different from definitional equivalence in an important way. To show that theories are Morita equivalent, one is allowed to take any finite number of Morita extensions of the theories. On the other hand, to show that two theories are definitionally equivalent, it appears that one is only allowed to take \textit{one} definitional extension of each theory. One might worry that Morita equivalence is therefore not perfectly analogous to definitional equivalence. 

Fortunately, this is not the case. Theorem 3.3 implies that if theories $T_1, \ldots, T_n$ are such that each $T_{i+1}$ is a definitional extension of $T_i$, then $T_n$ is in fact a definitional extension of $T_1$. (One can easily verify that this is not true of Morita extensions.) To show that two theories are definitionally equivalent, therefore, one actually \textit{is} allowed to take any finite number of definitional extensions of each theory.

If two theories are definitionally equivalent, then they are trivially Morita equivalent. Unlike definitional equivalence, however, Morita equivalence is capable of capturing a sense in which theories with different sort symbols are equivalent. The following example demonstrates that Morita equivalence is a more liberal criterion for theoretical equivalence.

\begin{example}
Recall the $\Sigma_1$-theory $T_1$ and the $\Sigma_2$-theory $T_2$ from Example \ref{motivating example}. These theories are not definitionally equivalent, but they are Morita equivalent. Let $\Sigma=\Sigma_1\cup\Sigma_2\cup\{i_2, i_3\}$ be a signature with $i_2$ and $i_3$ function symbols of arity $\sigma_2\rightarrow\sigma_1$ and $\sigma_3\rightarrow\sigma_1$. Consider the following $\Sigma$-sentences.
\begin{align*}
&\begin{aligned}
&\forall_{\sigma_1}x\big(p(x)\leftrightarrow\exists_{\sigma_2} y(i_2(y)=x)\big)\\
&\qquad\land\forall_{\sigma_2} y_1\forall_{\sigma_2} y_2\big(i_2(y_1)=i_2(y_2)\rightarrow y_1=y_2\big)\\
\end{aligned}
\tag{$\delta_{\sigma_2}$}\\
&\begin{aligned}
&\forall_{\sigma_1}x\big(q(x)\leftrightarrow\exists_{\sigma_3} z(i_3(z)=x)\big)\\
&\qquad\land\forall_{\sigma_3} z_1\forall_{\sigma_3} z_2\big(i_3(z_1)=i_3(z_2)\rightarrow z_1=z_2\big)\\
\end{aligned}
\tag{$\delta_{\sigma_3}$}\\
%
%
%
&\begin{aligned}
&\forall_{\sigma_1} x\big(\exists_{\sigma_2=1}y(i_2(y)=x)\lor\exists_{\sigma_3=1} z(i_3(z)=x)\big)\\
&\qquad\land \forall_{\sigma_2} y\forall_{\sigma_3} z \lnot\big(i_2(y)=i_3(z)\big)\\
\end{aligned}
\tag{$\delta_{\sigma_1}$}\\
&\begin{aligned}
&\forall_{\sigma_1} x\big(p(x)\leftrightarrow\exists_{\sigma_2} y(i_2(y)=x)\big)
\end{aligned} 
\tag{$\delta_p$}\\
&\begin{aligned}
&\forall_{\sigma_1} x\big(q(x)\leftrightarrow\exists_{\sigma_3} z(i_3(z)=x)\big)
\end{aligned}
\tag{$\delta_q$}
\end{align*}
The $\Sigma$-theory $T_1^1=T_1\cup\{\delta_{\sigma_2}, \delta_{\sigma_3}\}$ is a Morita extension of $T_1$ to the signature $\Sigma$. It defines $\sigma_2$ and $i_2$ to be the subsort of ``elements that are $p$'' and $\sigma_3$ and $i_3$ to be the subsort of ``elements that are $q$.''
The theory $T_2^1=T_2\cup\{\delta_{\sigma_1}\}$ is a Morita extension of $T_2$ to the signature $\Sigma_2\cup\{\sigma_1, i_2, i_3\}$. It defines $\sigma_1$ to be the coproduct sort of $\sigma_2$ and $\sigma_3$. Lastly, the $\Sigma$-theory $T_2^2=T_2^1\cup\{\delta_p, \delta_q\}$ is a Morita extension of $T_2^1$ to the signature $\Sigma$. It defines the predicates $p$ and $q$ to apply to elements in the ``images'' of $i_2$ and $i_3$, respectively.
%
One can verify that $T_1^1$ and $T_2^2$ are logically equivalent, so $T_1$ and $T_2$ are Morita equivalent.
\end{example}

\section{Categorical Equivalence}

Morita equivalence captures a clear and robust sense in which theories might be equivalent, but it is a difficult criterion to apply outside of the framework of first-order logic. Indeed, without a formal language one does not have the resources to say what an explicit definition is. Questions of equivalence and inequivalence of theories, however, still come up outside of this framework. It is well known, for example, that there are different ways of formulating the theory of smooth manifolds \citep{nestruev2002}. There are also different formulations of the theory of topological spaces \citep{kuratowski1966}. None of these formulations are first-order theories. Physical theories too are rarely formulated in first-order logic, and there are many pairs of physical theories that are often considered equivalent.\footnote{For example, see \cite{glymour1977}, \cite{knox2013}, and \cite{weatherallunpublished} for discussion of whether or not Newtonian gravitation and geometrized Newtonian gravitation are equivalent. See \cite{north2009}, \cite{halvorsonunpublished}, \cite{swansonhalvorsonunpublished}, \cite{curiel2014}, and \cite{barrett2014} for discussion of whether or not Hamiltonian and Lagrangian mechanics are equivalent. See \cite{rosenstockbarrettweatherall} for a discussion of general relativity and the theory of Einstein algebras and \cite{weatherall2015} for a summary of many of these results.}

Morita equivalence is incapable of capturing any sense in which these theories are equivalent. We need a criterion for theoretical equivalence that is applicable outside the framework of first-order logic. \textit{Categorical equivalence} is one such criterion.\footnote{The reader is encouraged to consult \cite{cwm}, \cite{borceux1994}, or \cite{awodey2010} for preliminaries.} It was first described by \cite{eilenbergmaclane1942, eilenbergmaclane1945}, but was only recently introduced into philosophy of science by \cite{halvorson2012, halvorson2015} and \cite{weatherallunpublished}. In this section, we describe categorical equivalence and then show how it is related to Morita equivalence.

Categorical equivalence is motivated by the following simple observation: First-order theories have categories of models. A \textbf{category} $C$ is a collection of objects with arrows between the objects that satisfy two basic properties. First, there is an associative composition operation $\circ$ defined on the arrows of $C$, and second, every object $c$ in $C$ has an identity arrow $1_c:c\rightarrow c$.  If $T$ is a $\Sigma$-theory, we will use the notation $\text{Mod}(T)$ to denote the \textbf{category of models} of $T$. An object in $\text{Mod}(T)$ is a model $M$ of $T$, and an arrow $f:M\rightarrow N$ between objects in $\text{Mod}(T)$ is an elementary embedding $f:M\rightarrow N$ between the models $M$ and $N$. One can easily verify that $\text{Mod}(T)$ is a category.

Before describing categorical equivalence, we need some additional terminology. Let $C$ and $D$ be categories. A \textbf{functor} $F:C\rightarrow D$ is a map from objects and arrows of $C$ to objects and arrows of $D$ that satisfies
$$
F(f:a\rightarrow b)=Ff:Fa\rightarrow Fb\qquad F(1_c)=1_{Fc}\qquad F(g\circ h)=Fg\circ Fh
$$ 
for every arrow $f:a\rightarrow b$ in $C$, every object $c$ in $C$, and every composable pair of arrows $g$ and $h$ in $C$. Functors are the ``structure-preserving maps'' between categories; they preserve domains, codomains, identity arrows, and the composition operation. A functor $F:C\rightarrow D$ is \textbf{full} if for all objects $c_1, c_2$ in $C$ and arrows $g:Fc_1\rightarrow Fc_2$ in $D$ there exists an arrow $f:c_1\rightarrow c_2$ in $C$ with $Ff=g$. $F$ is \textbf{faithful} if $Ff=Fg$ implies that $f=g$ for all arrows $f:c_1\rightarrow c_2$ and $g:c_1\rightarrow c_2$ in $C$. $F$ is \textbf{essentially surjective} if for every object $d$ in $D$ there exists an object $c$ in $C$ such that $Fc\cong d$. A functor $F:C\rightarrow D$ that is full, faithful, and essentially surjective is called an \textbf{equivalence of categories}. The categories $C$ and $D$ are \textbf{equivalent} if there exists an equivalence between them.\footnote{The concept of a ``natural transformation'' is often used to define when two categories are equivalent. $C$ and $D$ are equivalent if there are functors $F:C\rightarrow D$ and $G:D\rightarrow C$ such that $FG$ is naturally isomorphic to the identity functor $1_D$ and $GF$ is naturally isomorphic to $1_C$. See \cite{cwm} for the definition of a natural transformation and for proof that these two characterizations of equivalence are the same.}

A first-order theory $T$ has a category of models $\text{Mod}(T)$. This categorical structure, however, is not particular to first-order theories. Indeed, one can easily define categories of models for the different formulations of the theory of smooth manifolds and for the different formulations of the theory of topological spaces. The arrows in these categories are simply the structure-preserving maps between the objects in the categories. One can also define categories of models for physical theories.\footnote{See the examples in \citep{weatherallunpublished, weatherall2015, weatherallgauge} and \cite{rosenstockbarrettweatherall}.} This means that the following criterion for theoretical equivalence is applicable in a more general setting than definitional equivalence and Morita equivalence. In particular, it can be applied outside of the framework of first-order logic.

\begin{definition}
Theories $T_1$ and $T_2$ are \textbf{categorically equivalent} if their categories of models $\text{Mod}(T_1)$ and $\text{Mod}(T_2)$ are equivalent.
\end{definition}

Categorical equivalence captures a sense in which theories have ``isomorphic semantic structure.'' If $T_1$ and $T_2$ are categorically equivalent, then the relationships that models of $T_1$ bear to one another are ``isomorphic'' to the relationships that models of $T_2$ bear to one another.

In order to show how categorical equivalence relates to Morita equivalence, we focus on first-order theories. We will show that categorical equivalence is a strictly weaker criterion for theoretical equivalence than Morita equivalence is. We first need some preliminaries about the category of models $\text{Mod}(T)$ for a first-order theory $T$. Suppose that $\Sigma\subset\Sigma^+$ are signatures and that the $\Sigma^+$-theory $T^+$ is an extension of the $\Sigma$-theory $T$. There is a natural ``projection'' functor $\Pi:\text{Mod}(T^+)\rightarrow \text{Mod}(T)$ from the category of models of $T^+$ to the category of models of $T$. The functor $\Pi$ is defined as follows.
\begin{itemize}
\item $\Pi(M)=M|_\Sigma$ for every object $M$ in $\text{Mod}(T^+)$.
\item $\Pi(h)=h|_\Sigma$ for every arrow $h:M\rightarrow N$ in $\text{Mod}(T^+)$, where the family of maps $h|_\Sigma$ is defined to be $h|_\Sigma=\{h_\sigma: M_\sigma\rightarrow N_\sigma\text{ such that } \sigma\in \Sigma\}$.
\end{itemize}
Since $T^+$ is an extension of $T$, the $\Sigma$-structure $\Pi(M)$ is guaranteed to be a model of $T$. Likewise, the map $\Pi(h):M|_\Sigma\rightarrow N|_\Sigma$ is guaranteed to be an elementary embedding. One can easily verify that $\Pi:\text{Mod}(T^+)\rightarrow\text{Mod}(T)$ is a functor.

The following three propositions will together establish the relationship between $\text{Mod}(T^+)$ and $\text{Mod}(T)$ when $T^+$ is a Morita extension of $T$. They imply that when $T^+$ is a Morita extension of $T$, the functor $\Pi:\text{Mod}(T^+)\rightarrow\text{Mod}(T)$ is full, faithful, and essentially surjective. The categories $\text{Mod}(T^+)$ and $\text{Mod}(T)$ are therefore equivalent.

\begin{prop}
Let $\Sigma\subset\Sigma^+$ be signatures and $T$ a $\Sigma$-theory. If $T^+$ is a Morita extension of $T$ to $\Sigma^+$, then $\Pi$ is essentially surjective.
\end{prop}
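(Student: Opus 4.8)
The plan is to reduce this proposition directly to Theorem 4.1, which has already done all the real work. Recall that $\Pi$ is essentially surjective precisely when, for every object $M$ in $\text{Mod}(T)$, there is an object $N$ in $\text{Mod}(T^+)$ with $\Pi(N)\cong M$. So first I would fix an arbitrary model $M$ of $T$, regarded as an object of $\text{Mod}(T)$.

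Next I would apply Theorem 4.1 to $M$. Since $T^+$ is a Morita extension of $T$ to $\Sigma^+$, that theorem guarantees that $M$ has an expansion $M^+$ (unique up to isomorphism) that is a model of $T^+$. I would then take $N=M^+$ as the required object of $\text{Mod}(T^+)$. The one point to make explicit is that ``expansion'' means equality, not mere isomorphism, on the reduct: by the definition of expansion we have $M^+|_\Sigma = M$, and hence by the definition of the projection functor $\Pi(M^+)=M^+|_\Sigma = M$. Thus $\Pi(N)=M\cong M$, where the witnessing isomorphism is simply the identity, and this is exactly what essential surjectivity requires.

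I do not expect any genuine obstacle here, since the substantive content — the existence of an expansion satisfying all the explicit definitions and admissibility conditions — is precisely what Theorem 4.1 supplies. The only thing worth being careful about is keeping the bookkeeping straight between the structure $M^+$ (an object of $\text{Mod}(T^+)$), its reduct $M^+|_\Sigma$, and the action of $\Pi$, so that the chain $\Pi(M^+)=M^+|_\Sigma=M$ is transparent. The proof is therefore essentially a one-line appeal to Theorem 4.1, repackaged in the categorical language of the functor $\Pi$.
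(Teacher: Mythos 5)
Your proof is correct and is essentially identical to the paper's own argument: both fix a model $M$ of $T$, invoke Theorem 4.1 to obtain an expansion $M^+$ that models $T^+$, and observe that $\Pi(M^+)=M^+|_\Sigma=M$, which yields essential surjectivity (with the identity as the witnessing isomorphism). Your extra remark that an expansion gives equality of the reduct, not merely isomorphism, is just a more explicit statement of the same bookkeeping the paper performs.
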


\begin{proof}
If $M$ is a model of $T$, then Theorem 4.1 implies that there is a model $M^+$ of $T^+$ that is an expansion of $M$. Since $\Pi(M^+)=M^+|_\Sigma=M$ the functor $\Pi$ is essentially surjective.
\end{proof}

\begin{prop}
Let $\Sigma\subset\Sigma^+$ be signatures and $T$ a $\Sigma$-theory. If $T^+$ is a Morita extension of $T$ to $\Sigma^+$, then $\Pi$ is faithful.
\end{prop}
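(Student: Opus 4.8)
The plan is to unpack faithfulness and then argue that an arrow's values on the new sorts are completely determined by its values on the old sorts. Suppose $h, h' : M \to N$ are arrows in $\text{Mod}(T^+)$ with $\Pi(h) = \Pi(h')$. By the definition of $\Pi$ this means $h_\sigma = h'_\sigma$ for every $\sigma \in \Sigma$, and to conclude that $h = h'$ it suffices to show that $h_\sigma = h'_\sigma$ for every new sort $\sigma \in \Sigma^+ - \Sigma$.

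The key observation I would use is that an elementary embedding preserves function symbols. Instantiating the elementary-embedding condition at the $\Sigma^+$-formula $f(x_1, \ldots, x_n) = y$ gives
$$
h_\sigma \big( f^M(a_1, \ldots, a_n) \big) = f^N \big( h_{\sigma_1}(a_1), \ldots, h_{\sigma_n}(a_n) \big),
$$
and similarly for $h'$. In particular both $h$ and $h'$ commute with the projection, injection, inclusion, or quotient maps that $T^+$ uses to define any new sort. Since each of these function symbols connects the new sort to sorts lying in $\Sigma$---on which $h$ and $h'$ already agree---the equality on the new sort should fall out of the corresponding clause in the definition of that sort.

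I would then run through the four cases. If $\sigma$ is a product sort with projections $\pi_1, \pi_2$ to sorts $\sigma_1, \sigma_2 \in \Sigma$, then for $m \in M_\sigma$ both $h_\sigma(m)$ and $h'_\sigma(m)$ have the same images under $\pi_1^N$ and $\pi_2^N$ (namely $h_{\sigma_1}(\pi_1^M(m))$ and $h_{\sigma_2}(\pi_2^M(m))$), so the uniqueness clause of the product definition, which holds in $N$ since $N \vDash T^+$, forces $h_\sigma(m) = h'_\sigma(m)$. The subsort case is identical except that one invokes the injectivity of $i^N$ rather than a uniqueness clause. The coproduct and quotient cases run the same way but use the surjectivity clauses instead: since $M \vDash T^+$, each $m \in M_\sigma$ can be written as $\rho_j^M(a)$ (coproduct) or $\epsilon^M(a)$ (quotient) for some $a$ of a sort in $\Sigma$, and then preservation of $\rho_j$ (resp.\ $\epsilon$) together with $h_{\sigma_1} = h'_{\sigma_1}$ gives $h_\sigma(m) = h'_\sigma(m)$ directly.

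The argument is essentially a case analysis and presents no serious obstacle. The only point requiring attention is the coproduct and quotient cases, where one must remember to use the surjectivity (``covering'') clauses of the defining sentences in the model $M$ to express an arbitrary element of the new sort through the old sorts before pushing the agreement across.
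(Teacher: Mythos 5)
Your proof is correct and follows essentially the same route as the paper's: agreement on the old sorts plus commutation of elementary embeddings with the defining function symbols, with the uniqueness clause handling product sorts, injectivity of $i^N$ handling subsorts, and the covering clauses in $M$ handling coproduct and quotient sorts. Indeed, you spell out the coproduct and quotient cases (and correctly identify that they rely on surjectivity in $M$ rather than uniqueness in $N$), which the paper dismisses as ``analogous.''
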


\begin{proof}
Let $h:M\rightarrow N$ and $g:M\rightarrow N$ be arrows in $\text{Mod}(T^+)$ and suppose that $\Pi(h)=\Pi(g)$. We show that $h=g$. By assumption $h_\sigma=g_\sigma$ for every sort symbol $\sigma\in\Sigma$. We show that $h_\sigma=g_\sigma$ also for $\sigma\in\Sigma^+-\Sigma$. We consider the cases where $T^+$ defines $\sigma$ as a product sort or a subsort. The coproduct and quotient sort cases follow analogously. 

Suppose that $T^+$ defines $\sigma$ as a product sort with projections $\pi_1$ and $\pi_2$ of arity $\sigma\rightarrow\sigma_1$ and $\sigma\rightarrow\sigma_2$. Then the following equalities hold.
$$
\pi_1^N\circ h_\sigma=h_{\sigma_1}\circ\pi_1^M=g_{\sigma_1}\circ\pi_1^M=\pi_1^N\circ g_\sigma
$$
The first and third equalities hold since $h$ and $g$ are elementary embeddings and the second since $h_{\sigma_1}=g_{\sigma_1}$. One can verify in the same manner that $\pi_2^N\circ h_\sigma=\pi_2^N\circ g_\sigma$. Since $N$ is a model of $T^+$ and $T^+$ defines $\sigma$ as a product sort, we know that $N\vDash\forall_{\sigma_1}x\forall_{\sigma_2}y\exists_{\sigma=1} z(\pi_1(z)=x\land\pi_2(z)=y)$. This implies that $h_\sigma=g_\sigma$. 

On the other hand, if $T^+$ defines $\sigma$ as a subsort with injection $i$ of arity $\sigma\rightarrow\sigma_1$, then the following equalities hold.
$$
i^N\circ h_\sigma=h_{\sigma_1}\circ i^M=g_{\sigma_1}\circ i^M=i^N\circ g_\sigma
$$
These equalities follow in the same manner as above. Since $i^N$ is an injection it must be that $h_\sigma=g_\sigma$.
\end{proof}

Before proving that $\Pi$ is full, we need the following simple lemma. 

\begin{lemma}
Let $M$ be a model of $T^+$ with $a_1,\ldots, a_n$ elements of $M$ of sorts $\sigma_1,\ldots, \sigma_n\in\Sigma^+-\Sigma$. If $x_1,\ldots, x_n$ are variables sorts $\sigma_1,\ldots, \sigma_n$, then there is a code $\xi(x_1,\ldots, x_n, y_{11},\ldots, y_{n2})$ and elements $b_{11},\ldots, b_{n2}$ of $M$ such that $M\vDash\xi[a_1,\ldots, a_n, b_{11},\ldots, b_{n2}]$.
\end{lemma}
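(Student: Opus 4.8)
The plan is to prove this by cases on how each sort $\sigma_i \in \Sigma^+ - \Sigma$ is defined in the Morita extension $T^+$, constructing the code $\xi$ conjunct-by-conjunct and exhibiting the witnessing elements $b_{ij}$ directly from the admissibility conditions. Since the code $\xi(x_1,\ldots,x_n,y_{11},\ldots,y_{n2})$ is by definition a conjunction $\xi_1 \land \ldots \land \xi_n$, and since $M \vDash \xi[a_1,\ldots,a_n,b_{11},\ldots,b_{n2}]$ holds if and only if each conjunct is satisfied, it suffices to treat each $a_i$ separately: for each $i$, I would choose the appropriate form of $\xi_i$ (depending on whether $\sigma_i$ is a product, coproduct, subsort, or quotient sort) and supply elements $b_{i1}, b_{i2}$ of the appropriate sorts in $\Sigma$ so that $M \vDash \xi_i[a_i, b_{i1}, b_{i2}]$.

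The four cases are all straightforward applications of the fact that $M$ is a model of $T^+$, so $M$ satisfies each defining sentence $\delta_{\sigma_i}$. First I would handle the product case: if $\sigma_i$ is defined as a product sort with projections $\pi_1, \pi_2$, the conjunct $\xi_i$ is $\pi_1(x_i)=y_{i1} \land \pi_2(x_i)=y_{i2}$, and I simply set $b_{i1}=\pi_1^M(a_i)$ and $b_{i2}=\pi_2^M(a_i)$, which lie in sorts $\sigma_{i1}, \sigma_{i2} \in \Sigma$ as required. The subsort case is equally direct: with inclusion $i$ of arity $\sigma_i \to \sigma_{i1}$, the conjunct $\xi_i$ is $i(x_i)=y_{i1}$, so I set $b_{i1}=i^M(a_i)$ (and $b_{i2}$ may be chosen arbitrarily, as it does not appear in $\xi_i$). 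For the coproduct case, where $\rho_1, \rho_2$ are the injections, I would invoke the defining sentence $\delta_{\sigma_i}$, which guarantees that every element of $M_{\sigma_i}$ is in the image of exactly one injection; thus there is a $b_{i1}$ with $\rho_1^M(b_{i1})=a_i$ or a $b_{i2}$ with $\rho_2^M(b_{i2})=a_i$, and I select $\xi_i$ to be the corresponding disjunct ($\rho_1(y_{i1})=x_i$ or $\rho_2(y_{i2})=x_i$). The quotient case similarly uses the surjectivity clause $\forall_\sigma z\exists_{\sigma_1}x(\epsilon(x)=z)$ of $\delta_{\sigma_i}$ to produce a $b_{i1}$ with $\epsilon^M(b_{i1})=a_i$, matching the conjunct $\epsilon(y_{i1})=x_i$.

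The only genuine subtlety—and what I expect to be the main point requiring care rather than a real obstacle—is in the coproduct case, where the \emph{choice} of which disjunct form $\xi_i$ takes depends on the particular element $a_i$. The definition of ``code'' permits $\xi_i$ to be \emph{either} $\rho_1(y_{i1})=x_i$ \emph{or} $\rho_2(y_{i2})=x_i$, so the code produced is genuinely a function of the elements $a_1,\ldots,a_n$, not fixed in advance. This is exactly why the statement asserts the existence of \emph{a} code $\xi$ together with witnessing elements, rather than claiming that every code can be satisfied. Once this is recognized, the argument is just an assembly of the four cases, and I would conclude by noting that the conjunction of the chosen $\xi_i$ is, by construction, a code in the sense defined earlier, and that $M \vDash \xi[a_1,\ldots,a_n,b_{11},\ldots,b_{n2}]$ since each conjunct is satisfied by the chosen witnesses.
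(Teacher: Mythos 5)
Your proposal is correct and matches the paper's own proof essentially step for step: both construct the code conjunct-by-conjunct according to how each $\sigma_i$ is defined, read off the witnesses $b_{ij}$ from the interpretations of the new function symbols (e.g.\ $b_{i1}=\pi_1^M(a_i)$, $b_{i2}=\pi_2^M(a_i)$ in the product case), and both isolate the coproduct case as the one place where the code itself must be chosen depending on which injection's image contains $a_i$. Your write-up is in fact somewhat more explicit than the paper's, which compresses the non-coproduct cases into ``one defines the elements in the obvious way.''
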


\begin{proof}
We define the code $\xi(x_1,\ldots, y_{n2})$. If $T^+$ defines $\sigma_i$ as a product sort, quotient sort, or subsort then we have no choice about what the conjunct $\xi_i(x_i, y_{i1}, y_{i2})$ is. If $T^+$ defines $\sigma_i$ as a coproduct sort, then we know that either there is an element $b_{i1}$ of $M$ such that $\rho_1(b_{i1})=a_i$ or there is an element $b_{i2}$ of $M$ such that $\rho_2(b_{i2})=a_i$. If the former, we let $\xi_i$ be $\rho_1(y_{i1})=x_i$ and if the latter, we let $\xi_i$ be $\rho_2(y_{i2})=x_i$. One defines the elements $b_{11}, \ldots, b_{n2}$ in the obvious way. For example, if $\sigma_i$ is a product sort, then we let $b_{i1}=\pi_1^M(a_i)$ and $b_{i2}=\pi_2^M(a_i)$. By construction, we have that $M\vDash\xi[a_1,\ldots, a_n, b_{11},\ldots, b_{n2}]$.
\end{proof}

%

We now use this lemma to show that $\Pi$ is full.

\begin{prop}
Let $\Sigma\subset\Sigma^+$ be signatures and $T$ a $\Sigma$-theory. If $T^+$ is a Morita extension of $T$ to $\Sigma^+$, then $\Pi$ is full.
\end{prop}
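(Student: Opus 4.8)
The plan is to use the induced map $g^+$ that was constructed just before Lemma 4.1. By the paper's definition, fullness of $\Pi$ requires that for any models $M, N$ of $T^+$ and any arrow $g:\Pi(M)\to\Pi(N)$ in $\text{Mod}(T)$ — that is, any elementary embedding $g:M|_\Sigma\to N|_\Sigma$ — there is an arrow $h:M\to N$ in $\text{Mod}(T^+)$ with $\Pi(h)=g$. The natural candidate is $h=g^+$. Since $g^+_\sigma=g_\sigma$ for every $\sigma\in\Sigma$ by the definition of the induced map, we immediately get $\Pi(g^+)=g^+|_\Sigma=g$. So the entire content of the proof is to check that $g^+$ is genuinely an arrow in $\text{Mod}(T^+)$, i.e.\ that $g^+:M\to N$ is an elementary embedding of $\Sigma^+$-structures.

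To establish this, I would show that $M\vDash\phi[a_1,\ldots,a_n,\overline{a}_1,\ldots,\overline{a}_m]$ iff $N\vDash\phi[g^+(a_1),\ldots,g^+(a_n),g(\overline{a}_1),\ldots,g(\overline{a}_m)]$ for every $\Sigma^+$-formula $\phi(x_1,\ldots,x_n,\overline{x}_1,\ldots,\overline{x}_m)$, where the $x_i$ have sorts in $\Sigma^+-\Sigma$ and the $\overline{x}_j$ have sorts in $\Sigma$, and for all matching elements of $M$. The two ingredients are Lemma 5.1 and Theorem 4.3. First apply Lemma 5.1 to the elements $a_1,\ldots,a_n$ to obtain a code $\xi(x_1,\ldots,y_{n2})$ and witnessing elements $b_{11},\ldots,b_{n2}$ of $M$ (all of sorts in $\Sigma$) with $M\vDash\xi[a_1,\ldots,a_n,b_{11},\ldots,b_{n2}]$. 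Then apply Theorem 4.3 to $\phi$ and this code to get a $\Sigma$-formula $\phi^*(\overline{x}_1,\ldots,\overline{x}_m,y_{11},\ldots,y_{n2})$ with $T^+\vDash\xi\rightarrow(\phi\leftrightarrow\phi^*)$.

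The argument then chains together three equivalences. Since $M\vDash\xi[a,b]$ and $M$ is a model of $T^+$, the translation gives $M\vDash\phi[a,\overline{a}]$ iff $M\vDash\phi^*[\overline{a},b]$. Because $\phi^*$ is a $\Sigma$-formula all of whose free variables have sorts in $\Sigma$, satisfaction is unaffected by passing to the reduct, so since $g$ is an elementary embedding $M|_\Sigma\to N|_\Sigma$ we may transfer: $M\vDash\phi^*[\overline{a},b]$ iff $N\vDash\phi^*[g(\overline{a}),g(b)]$. Finally, using that $N$ is a model of $T^+$ and the translation again, $N\vDash\phi^*[g(\overline{a}),g(b)]$ iff $N\vDash\phi[g^+(a),g(\overline{a})]$, provided $N\vDash\xi[g^+(a),g(b)]$. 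Composing these yields the desired biconditional.

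The main obstacle is exactly the last proviso: verifying $N\vDash\xi[g^+(a_1),\ldots,g^+(a_n),g(b_{11}),\ldots,g(b_{n2})]$. This is where the construction of $g^+$ does its real work, since $g^+$ was defined so that the relevant diagrams commute — that is, so that $g^+$ preserves the function symbols ($\pi_1,\pi_2,\rho_1,\rho_2,i,\epsilon$) that define the new sorts, and these are precisely the symbols occurring in the conjuncts $\xi_i$ of the code. I would check this conjunct by conjunct across the four cases. For instance, in the product case $\xi_i$ asserts $\pi_1(x_i)=y_{i1}\land\pi_2(x_i)=y_{i2}$, and $M\vDash\xi_i[a_i,b_{i1},b_{i2}]$ together with the commuting square $\pi_1^N\circ g^+_\sigma=g^+_{\sigma_1}\circ\pi_1^M$ gives $\pi_1^N(g^+(a_i))=g^+(b_{i1})=g(b_{i1})$, and similarly for $\pi_2$; the subsort, coproduct, and quotient conjuncts are handled the same way using their respective diagrams. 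Once this is in hand, $g^+$ is an elementary embedding, so $g^+$ is an arrow in $\text{Mod}(T^+)$ with $\Pi(g^+)=g$, and $\Pi$ is full.
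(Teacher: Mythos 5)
Your proposal is correct and follows essentially the same route as the paper's proof: take the induced map $h^+$ (your $g^+$) as the candidate preimage, invoke Lemma 5.1 to produce a code and witnesses, apply Theorem 4.3 to translate $\phi$ into a $\Sigma$-formula $\phi^*$, and chain the resulting equivalences through the elementary embedding of reducts. The only difference is presentational: the paper compresses the step you single out as the ``main obstacle'' into the remark that ``the definition of the map $h^+$ implies that $N\vDash\xi[h^+(\ldots)]$,'' whereas you correctly unpack it via the commuting diagrams from the construction of the induced map.
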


\begin{proof}
Let $M$ and $N$ be models of $T^+$ with $h:\Pi(M)\rightarrow\Pi(N)$ an arrow in $\text{Mod}(T)$. This means that $h:M|_\Sigma\rightarrow N|_\Sigma$ is an elementary embedding. We show that the map $h^+:M\rightarrow N$ is an elementary embedding and therefore an arrow in $\text{Mod}(T^+)$. Since $\Pi(h^+)=h$ this will imply that $\Pi$ is full.

Let $\phi(x_1,\ldots, x_n, \overline{x}_1,\ldots, \overline{x}_m)$ be a $\Sigma^+$-formula and let $a_1,\ldots, a_n, \overline{a}_1,\ldots, \overline{a}_m$ be elements of $M$ of the same sorts as the variables $x_1,\ldots, x_n, \overline{x}_1,\ldots, \overline{x}_m$. Lemma 5.1 implies that there is a code $\xi(x_1,\ldots, x_n, y_{11}, \ldots, y_{n2})$ and elements $b_{11},\ldots, b_{n2}$ of $M$ such that $M\vDash\xi[a_1,\ldots, a_n, b_{11}, \ldots, b_{n2}]$. The definition of the map $h^+$ implies that $N\vDash\xi[h^+(a_1,\ldots, a_n, b_{11}, \ldots, b_{n2})]$. We now show that $M\vDash\phi[a_1,\ldots, a_n, \overline{a}_1,\ldots,\overline{a}_m]$ if and only if $N\vDash\phi[h^+(a_1,\ldots, a_n, \overline{a}_1,\ldots,\overline{a}_m)]$. By Theorem 4.3 there is a $\Sigma$-formula $\phi^*(\overline{x}_1,\ldots, \overline{x}_m,y_{11}, \ldots, y_{n2})$ such that 
\begin{equation}
\begin{aligned}
T^+\vDash \forall_{\sigma_1} x_1\ldots&\forall_{\sigma_n} x_n\forall_{\overline{\sigma}_1} \overline{x}_1\ldots\forall_{\overline{\sigma}_m}\overline{x}_m\forall_{\sigma_{11}} y_{11}\ldots\forall_{\sigma_{n2}} y_{n2}\big(\xi(x_1,\ldots, y_{n2})\rightarrow\\
&\big(\phi(x_1,\ldots x_n, \overline{x}_1,\ldots, \overline{x}_m)\leftrightarrow\phi^*(\overline{x}_1,\ldots,\overline{x}_m, y_{11},\ldots, y_{n2})\big)\big)
\end{aligned}
\end{equation}
We then see that the following string of equivalences holds.
\begin{align*}
M\vDash \phi[a_1,\ldots, a_n, \overline{a}_1,\ldots, \overline{a}_m]
\Longleftrightarrow& M\vDash \phi^*[\overline{a}_1,\ldots, \overline{a}_m, b_{11},\ldots, b_{n2}]\\
\Longleftrightarrow& M|_\Sigma\vDash\phi^*[\overline{a}_1,\ldots, \overline{a}_m, b_{11},\ldots, b_{n2}]\\
\Longleftrightarrow& N|_\Sigma\vDash\phi^*[h(\overline{a}_1,\ldots, \overline{a}_m, b_{11},\ldots, b_{n2})]\\
\Longleftrightarrow& N\vDash\phi^*[h(\overline{a}_1,\ldots, \overline{a}_m, b_{11},\ldots, b_{n2})]\\
\Longleftrightarrow& N\vDash\phi^*[h^+(\overline{a}_1,\ldots, \overline{a}_m, b_{11},\ldots, b_{n2})]\\
\Longleftrightarrow& N\vDash\phi[h^+(a_1,\ldots, a_n, \overline{a}_1,\ldots, \overline{a}_m)]
\end{align*}
The first and sixth equivalences hold by (5) and the fact that $M$ and $N$ are models of $T^+$, the second and fourth hold since $\phi^*$ is a $\Sigma$-formula, the third since $h:M|_\Sigma\rightarrow N|_\Sigma$ is an elementary embedding, and the fifth by the definition of $h^+$ and the fact that the elements $\overline{a}_1,\ldots, \overline{a}_m, b_{11}, \ldots, b_{n2}$ have sorts in $\Sigma$.
\end{proof}

These three propositions provide us with the resources to show how categorical equivalence is related to Morita equivalence. Our first result follows as an immediate corollary.

\begin{theorem}
Morita equivalence entails categorical equivalence.
\end{theorem}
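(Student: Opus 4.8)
The plan is to assemble the three Propositions just proved into the single statement that each Morita extension induces an equivalence of categories, and then to chain these equivalences along the two towers of Morita extensions provided by the definition of Morita equivalence. So suppose $T_1$ and $T_2$ are Morita equivalent, witnessed by theories $T_1^1,\ldots,T_1^n$ and $T_2^1,\ldots,T_2^m$ as in the Definition, with $T_1^1=T_1$, $T_2^1=T_2$, each $T_1^{i+1}$ a Morita extension of $T_1^i$, each $T_2^{i+1}$ a Morita extension of $T_2^i$, and $T_1^n$ and $T_2^m$ logically equivalent $\Sigma$-theories.

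First I would record the single-step fact. Since each $T_1^{i+1}$ is a Morita extension of $T_1^i$, it is in particular an extension, so the projection functor $\Pi_i:\text{Mod}(T_1^{i+1})\rightarrow\text{Mod}(T_1^i)$ is defined; Propositions 5.3, 5.2, and 5.1 show that $\Pi_i$ is full, faithful, and essentially surjective, hence an equivalence of categories. The same holds for every projection functor in the $T_2$ tower. Next I would compose: the composite $\Pi_1\circ\cdots\circ\Pi_{n-1}:\text{Mod}(T_1^n)\rightarrow\text{Mod}(T_1)$ is again full, faithful, and essentially surjective, and so is again an equivalence. Thus $\text{Mod}(T_1)$ is equivalent to $\text{Mod}(T_1^n)$, and likewise $\text{Mod}(T_2)$ is equivalent to $\text{Mod}(T_2^m)$.

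Finally I would exploit logical equivalence. Because $T_1^n$ and $T_2^m$ are logically equivalent theories in the same signature $\Sigma$, they have exactly the same class of models, and the arrows of each category are precisely the elementary embeddings between those same $\Sigma$-structures. Hence $\text{Mod}(T_1^n)$ and $\text{Mod}(T_2^m)$ are literally the same category, not merely equivalent ones. Stringing the three facts together, $\text{Mod}(T_1)$ is equivalent to $\text{Mod}(T_1^n)=\text{Mod}(T_2^m)$, which is equivalent to $\text{Mod}(T_2)$; by transitivity of categorical equivalence, $\text{Mod}(T_1)$ and $\text{Mod}(T_2)$ are equivalent, so $T_1$ and $T_2$ are categorically equivalent.

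There is no real obstacle here, as the theorem is flagged as an immediate corollary and all the substantive work lives in the three Propositions. The only points needing a moment's care are the two routine lemmas about categorical equivalence used above: that a composite of equivalences is again an equivalence (the essential-surjectivity half of this uses that a functor sends isomorphisms to isomorphisms), and that logical equivalence of two theories over the same $\Sigma$ collapses their categories of models to a single category rather than merely an equivalent pair.
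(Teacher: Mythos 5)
Your proposal is correct and follows essentially the same route as the paper: it invokes Propositions 5.1, 5.2, and 5.3 to show each projection functor $\Pi$ is an equivalence, and then chains these equivalences along the two towers of Morita extensions, using the fact that the logically equivalent common extensions have the same category of models. The paper's proof is just a compressed version of this argument; your added remarks about composites of equivalences and the identity of $\text{Mod}(T_1^n)$ and $\text{Mod}(T_2^m)$ only make explicit what the paper leaves tacit.
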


\begin{proof}
Suppose that $T_1$ and $T_2$ are Morita equivalent. Then there are theories $T_1^1, \ldots, T_1^n$ and $T_2^1,\ldots, T_2^m$ that satisfy the three conditions in the definition of Morita equivalence. Propositions 5.1, 5.2, and 5.3 imply that the $\Pi$ functors between these theories, represented by the arrows in the following figure, are all equivalences.
\begin{center}
\includegraphics{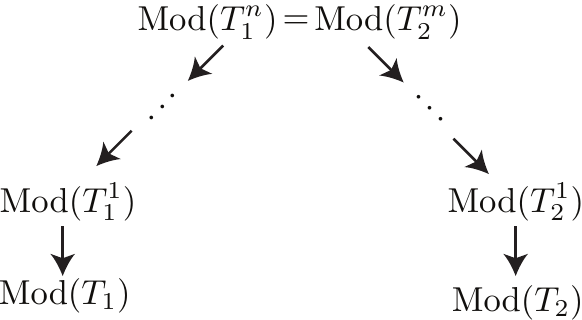}
\end{center}
This implies that $\text{Mod}(T_1)$ is equivalent to $\text{Mod}(T_2)$, and so $T_1$ and $T_2$ are categorically equivalent.
\end{proof}

The converse to Theorem 5.1, however, does not hold. There are theories that are categorically equivalent but not Morita equivalent.\footnote{\cite{halvorson2012} mentions the following example to illustrate a different point.} In order to show this, we need one piece of terminology. A category $C$ is \textbf{discrete} if it is equivalent to a category whose only arrows are identity arrows. 

\begin{theorem}
Categorical equivalence does not entail Morita equivalence.
\end{theorem}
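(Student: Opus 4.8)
The plan is to exhibit two concrete first-order theories whose categories of models are equivalent but which fail to be Morita equivalent. The natural strategy is to find theories whose model categories are both \emph{discrete} in the sense just defined, since a discrete category is determined up to equivalence entirely by the cardinality of its isomorphism classes of objects. If I can produce two theories $T_1$ and $T_2$ such that $\text{Mod}(T_1)$ and $\text{Mod}(T_2)$ are discrete categories with the ``same'' collection of isomorphism classes, they will be categorically equivalent for essentially combinatorial reasons, and then all the work goes into proving they are \emph{not} Morita equivalent.

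First I would choose candidate theories. A good choice is a pair of theories each having, up to isomorphism, exactly one model for each finite (or each relevant) cardinality, with no nonidentity elementary embeddings between nonisomorphic models and only identity self-embeddings, so that $\text{Mod}(T_i)$ is genuinely discrete. For instance, one can take theories forcing models to be rigid and have a fixed, controlled spectrum of sizes — the theory of a single sort with no structure (pure equality) has models that are just sets, with embeddings being injections, so that is \emph{not} discrete; one must instead add enough structure to kill all nonidentity arrows (e.g. a theory whose models are rigid structures admitting no proper elementary self-embeddings). The key design constraint is: discreteness gives categorical equivalence cheaply, while the two theories should differ in some invariant that Morita equivalence must preserve.

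Next I would isolate an invariant preserved under Morita equivalence but not under categorical equivalence, and use it to separate the two theories. Theorem~4.1 and Theorem~4.2 are the crucial tools here: a Morita extension is a conservative extension, and every model of the base theory expands uniquely (up to isomorphism) to a model of the extension. Consequently, Morita equivalence preserves features of the \emph{elementary theory} and its model-cardinality spectrum in a way that mere equivalence of the model categories need not. The plan is therefore to pick $T_1$ and $T_2$ so that their categories of models are abstractly equivalent (same discrete shape) but so that, for instance, $T_1$ has models of certain cardinalities that $T_2$ provably cannot match through any chain of Morita extensions, or so that the number-theoretic structure of their model spectra cannot be reconciled by the sort-building operations (product, coproduct, subsort, quotient). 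Here I would appeal to the fact that Morita extensions only build new sorts as finite products, coproducts, subsorts, and quotients of old ones, which imposes rigid arithmetic relations on the cardinalities of the sorts, whereas categorical equivalence is blind to how objects are broken into sorts.

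The hard part will be the negative direction: proving no common Morita extension exists. Verifying categorical equivalence is comparatively routine once discreteness is established, but ruling out Morita equivalence requires an argument that survives arbitrarily long chains of Morita extensions on both sides. The main obstacle is that Morita equivalence permits finitely many extension steps and a final logical equivalence in an enlarged signature, so I cannot simply inspect a single extension; I must extract an invariant (such as a constraint on the possible cardinalities of models, or a structural feature of the category that is preserved by each $\Pi$ functor and hence by the whole chain) that is manifestly incompatible between $T_1$ and $T_2$. Concretely, I would argue that because each $\Pi$ is an equivalence and the composite connects $\text{Mod}(T_1)$ to $\text{Mod}(T_2)$, any Morita equivalence already entails categorical equivalence (Theorem~5.1), so the separating invariant must be strictly finer than equivalence type of the model category — for example, the precise cardinality spectrum realized by models, which Theorem~4.1 forces a common Morita extension to share with both theories simultaneously, producing a contradiction when the two spectra are arithmetically irreconcilable.
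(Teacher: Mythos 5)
Your first half matches the paper's strategy: both build theories whose categories of models are discrete, so that categorical equivalence reduces to counting isomorphism classes of objects. (The paper's concrete choice: $T_1$ has one sort, countably many unary predicates $p_i$, and the axiom that there is exactly one element; $T_2$ is the same except it also has the axiom $\forall_{\sigma_2}y(q_0(y)\rightarrow q_i(y))$ for every $i$. Both categories are discrete with $2^{\aleph_0}$ objects, so any bijection on objects is an equivalence.)

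The gap is in the negative direction, and it is fatal as proposed. Your separating invariant --- the ``precise cardinality spectrum realized by models,'' or ``rigid arithmetic relations on the cardinalities of the sorts'' --- is not a Morita invariant, and you give neither a concrete pair of theories nor a proof that it is. Morita equivalence is designed precisely to scramble cardinality data: subsorts, quotients, products and coproducts let one split a sort into pieces or glue sorts together, and the paper's own motivating example (a one-sorted theory whose domain is partitioned by $p$ and $q$, versus a two-sorted theory with no axioms) is a Morita equivalence in which the sort cardinalities on the two sides do not match. Worse, in the discrete-category setting your plan requires, the natural examples have identical spectra anyway: in the paper's example every model of either theory has exactly one element, so no cardinality invariant can distinguish them. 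What the paper uses instead is an invariant of the entailment structure, extracted from Corollary 4.1 and Theorem 4.2: if $T$ were a common Morita extension, the sentence $\forall_{\sigma_2}y\,q_0(y)$ would translate to a $\Sigma_1$-sentence $\phi$ such that any $\Sigma_1$-sentence $\psi$ with $T_1\vDash\psi\rightarrow\phi$ satisfies either $T_1\vDash\lnot\psi$ or $T_1\vDash\phi\rightarrow\psi$; this holds because in $T_2$, once $q_0$ holds of the unique element, every $q_i$ holds and the model is completely determined, so $\forall_{\sigma_2}y\,q_0(y)$ is essentially an atom of the Lindenbaum algebra, and that property survives translation plus conservativity. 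But no $\Sigma_1$-sentence can have this property: $\phi$ mentions only finitely many $p_i$, so $\psi:=\phi\land\forall_{\sigma_1}x\,p_i(x)$ for a fresh $p_i$ is consistent with $T_1$, entails $\phi$, and is not entailed by it. To repair your proof you would need to replace cardinality arithmetic by some such syntactic or Lindenbaum-algebra invariant that provably passes through chains of Morita extensions; that replacement is exactly where the real work of the theorem lies.
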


\begin{proof}
Let $\Sigma_1=\{\sigma_1, p_0, p_1, p_2,\ldots\}$ be a signature with a single sort symbol $\sigma_1$ and a countable infinity of predicate symbols $p_i$ of arity $\sigma_1$. Let $\Sigma_2=\{\sigma_2, q_0, q_1, q_2,\ldots\}$ be a signature with a single sort symbol $\sigma_2$ and a countable infinity of predicate symbols $q_i$ of arity $\sigma_2$. Define the $\Sigma_1$-theory $T_1$ and $\Sigma_2$-theory $T_2$ as follows. 
\begin{align*}
T_1&=\{\exists_{\sigma_1=1} x (x=x)\}\\
T_2&=\{\exists_{\sigma_2=1} y (y=y), \forall_{\sigma_2}y(q_0(y)\rightarrow q_1(y)), \forall_{\sigma_2}y(q_0(y)\rightarrow q_2(y)), \ldots\}
\end{align*}
The theory $T_2$ has the sentence $\forall_{\sigma_2}y(q_0(y)\rightarrow q_i(y))$ as an axiom for each $i\in\mathbb{N}$. 

We first show that $T_1$ and $T_2$ are categorically equivalent. It is easy to see that $\text{Mod}(T_1)$ and $\text{Mod}(T_2)$ both have $2^{\aleph_0}$ (non-isomorphic) objects. Furthermore, $\text{Mod}(T_1)$ and $\text{Mod}(T_2)$ are both discrete categories. We show here that $\text{Mod}(T_1)$ is discrete. Suppose that there is an elementary embedding $f:M\rightarrow N$ between models $M$ and $N$ of $T_1$. It must be that $f$ maps the unique element $m\in M$ to the unique element $n\in N$. Furthermore, since $f$ is an elementary embedding, $M\vDash p_i[m]$ if and only if $N\vDash p_i[n]$ for every predicate $p_i\in \Sigma_1$. This implies that $f:M\rightarrow N$ is actually an isomorphism. Every arrow $f:M\rightarrow N$ in $\text{Mod}(T_1)$ is therefore an isomorphism, and there is at most one arrow between any two objects of $\text{Mod}(T_1)$. This immediately implies that $\text{Mod}(T_1)$ is discrete. An analogous argument demonstrates that $\text{Mod}(T_2)$ is discrete. Any bijection between the objects of $\text{Mod}(T_1)$ and $\text{Mod}(T_2)$ is therefore an equivalence of categories.

But $T_1$ and $T_2$ are not Morita equivalent. Suppose for contradiction that $T$ is a ``common Morita extension'' of $T_1$ and $T_2$. Corollary 4.1 implies that there is a $\Sigma_1$-sentence $\phi$ such that $T\vDash\forall_{\sigma_2}yq_0(y)\leftrightarrow\phi$. One can verify using Theorem 4.2 and Corollary 4.1 that the sentence $\phi$ has the following property: If $\psi$ is a $\Sigma_1$-sentence and $T_1\vDash\psi\rightarrow\phi$, then either (i) $T_1\vDash\lnot\psi$ or (ii) $T_1\vDash\phi\rightarrow\psi$. But $\phi$ cannot have this property. Consider the $\Sigma_1$-sentence $$
\psi:=\phi\land\forall_{\sigma_1}x p_i(x)
$$
where $p_i$ is a predicate symbol that does not occur in $\phi$. We trivially see that $T_1\vDash \psi\rightarrow\phi$, but neither (i) nor (ii) hold of $\psi$. This implies that $T_1$ and $T_2$ are not Morita equivalent.
\end{proof}

\section{Conclusion}
We have discussed three formal criteria for theoretical equivalence, and we have shown that they form the following hierarchy.
\begin{center}
\includegraphics{hierarchy.pdf}
\end{center}
This hierarchy yields a precise sense in which definitional equivalence is too strict a criterion for theoretical equivalence. One often has good reason to consider theories with different sort symbols equivalent. But definitional equivalence does not allow one to do this. Morita equivalence, on the other hand, does allow one to capture a sense in which such theories might be equivalent. 

The hierarchy also yields a precise sense in which categorical equivalence is too liberal a criterion for theoretical equivalence. The example from Theorem 5.2 is quite general. Any two theories with discrete categories of models will be categorically equivalent, as long as they have the same number of models. But one often has good reason to consider two such theories inequivalent. For example, there is a sense in which the two theories from Theorem 5.2 do not ``say the same thing.'' According to the theory $T_2$, there is a special predicate $q_0$. If the predicate $q_0$ holds, that completely determines what else is true according to $T_2$. The theory $T_1$, however, singles out no such predicate. If one takes categorical equivalence as the standard for theoretical equivalence, then one is forced to consider $T_1$ and $T_2$ equivalent. Morita equivalence, on the other hand, allows one to consider them inequivalent.

Even though there is a sense in which it is too liberal, categorical equivalence is currently our most promising formal criterion for theoretical equivalence outside the framework of first-order logic. We have seen that it is a weaker criterion than Morita equivalence, but one nonetheless hopes that it is not ``too much weaker.'' One could substantiate this hope by proving a result of the following form.
\begin{quote}
If $T_1$ and $T_2$ are categorically equivalent and $\mathfrak{P}$, then $T_1$ and $T_2$ are Morita equivalent.
\end{quote}
$\mathfrak{P}$ is some additional constraint that $T_1$ and $T_2$ might be required to satisfy. For example, one might hope that the result could be proven when $\mathfrak{P}$ is ``$T_1$ and $T_2$ have finite signatures.'' If a result of this form holds for a general property $\mathfrak{P}$, that would show that categorical equivalence is ``almost as strong'' as Morita equivalence.  

Promising work in this direction has been done by \cite{makkai} and \cite{awodeyforssell}. Makkai shows that if the \textit{ultracategories} $\text{Mod}(T_1)$ and $\text{Mod}(T_2)$ are equivalent, then $T_1$ and $T_2$ are Morita equivalent. Awodey and Forssell
show that if the \textit{topological groupoids} $\text{Mod}(T_1)$ and $\text{Mod}(T_2)$ are equivalent, then $T_1$ and $T_2$ are Morita equivalent. But there is still more work to be done before we completely understand the relationship between Morita equivalence and categorical equivalence.\renewcommand{\thefootnote}{$\star$}\footnote{This material is based upon work supported by the National Science Foundation under Grant No.~DGE 1148900.}


\bibliographystyle{apalike}
\bibliography{/Users/thomaswbarrett/Desktop/Work/masterbib}

\begin{thebibliography}{}

\bibitem[Ad\'amek et~al., 2006]{adamek}
Ad\'amek, J., Sobral, M., and Sousa, L. (2006).
\newblock Morita equivalence of many-sorted algebraic theories.
\newblock {\em Journal of Algebra}.

\bibitem[Andr\'eka et~al., 2008]{andreka2008}
Andr\'eka, H., Madara\'sz, J., and N\'emeti, I. (2008).
\newblock Defining new universes in many-sorted logic.
\newblock {\em Manuscript}.

\bibitem[Andr\'eka et~al., 2005]{andreka2005}
Andr\'eka, H., Madar\'asz, J.~X., and N\'emeti, I. (2005).
\newblock Mutual definability does not imply definitional equivalence, a simple
  example.
\newblock {\em Mathematical Logic Quarterly}.

\bibitem[Artigue et~al., 1978]{artigue1978}
Artigue, M., Isambert, E., Perrin, M., and Zalc, A. (1978).
\newblock Some remarks on bicommutability.
\newblock {\em Fundamenta Mathematicae}.

\bibitem[Awodey, 2010]{awodey2010}
Awodey, S. (2010).
\newblock {\em Category Theory}.
\newblock Oxford.

\bibitem[Awodey and Forssell, 2010]{awodeyforssell}
Awodey, S. and Forssell, H. (2010).
\newblock First-order logical duality.
\newblock {\em Manuscript}.

\bibitem[Barrett, 2014]{barrett2014}
Barrett, T.~W. (2014).
\newblock On the structure of classical mechanics.
\newblock {\em Forthcoming in The British Journal for the Philosophy of
  Science}.

\bibitem[Barrett and Halvorson, 2015]{barrett2014a}
Barrett, T.~W. and Halvorson, H. (2015).
\newblock Glymour and {Q}uine on theoretical equivalence.
\newblock {\em Manuscript}.

\bibitem[Borceux, 1994]{borceux1994}
Borceux, F. (1994).
\newblock {\em Handbook of Categorical Algebra}, volume~1.
\newblock Cambridge University Press.

\bibitem[Coffey, 2014]{coffey2014}
Coffey, K. (2014).
\newblock Theoretical equivalence as interpretive equivalence.
\newblock {\em The British Journal for the Philosophy of Science}.

\bibitem[Curiel, 2014]{curiel2014}
Curiel, E. (2014).
\newblock Classical mechanics is {L}agrangian; it is not {H}amiltonian.
\newblock {\em The British Journal for the Philosophy of Science}.

\bibitem[de~Bouv\'ere, 1965]{debouvere1965}
de~Bouv\'ere, K.~L. (1965).
\newblock Synonymous theories.
\newblock In {\em Symposium on the Theory of Models}. North-Holland Publishing
  Company.

\bibitem[Dukarm, 1988]{dukarm}
Dukarm, J.~J. (1988).
\newblock Morita equivalence of algebraic theories.
\newblock {\em Colloquium Mathematicae}.

\bibitem[Eilenberg and {Mac Lane}, 1942]{eilenbergmaclane1942}
Eilenberg, S. and {Mac Lane}, S. (1942).
\newblock Group extensions and homology.
\newblock {\em Annals of Mathematics}.

\bibitem[Eilenberg and {Mac Lane}, 1945]{eilenbergmaclane1945}
Eilenberg, S. and {Mac Lane}, S. (1945).
\newblock General theory of natural equivalences.
\newblock {\em Transactions of the American Mathematical Society}.

\bibitem[Freyd, 1964]{freyd1964}
Freyd, P. (1964).
\newblock {\em Abelian categories}.
\newblock Harper and Row.

\bibitem[Friedman and Visser, 2014]{friedman2014}
Friedman, H. and Visser, A. (2014).
\newblock When bi-interpretability implies synonymy.
\newblock {\em Manuscript}.

\bibitem[Glymour, 1970]{glymour1970}
Glymour, C. (1970).
\newblock Theoretical realism and theoretical equivalence.
\newblock In {\em PSA: Proceedings of the Biennial Meeting of the Philosophy of
  Science Association}.

\bibitem[Glymour, 1977]{glymour1977}
Glymour, C. (1977).
\newblock The epistemology of geometry.
\newblock {\em Nous}.

\bibitem[Glymour, 1980]{glymour1980}
Glymour, C. (1980).
\newblock {\em Theory and Evidence}.
\newblock Princeton.

\bibitem[Glymour, 2013]{glymour2013}
Glymour, C. (2013).
\newblock Theoretical equivalence and the semantic view of theories.
\newblock {\em Philosophy of Science}.

\bibitem[Halvorson, 2011]{halvorsonunpublished}
Halvorson, H. (2011).
\newblock Natural structures on state space.
\newblock {\em Manuscript}.

\bibitem[Halvorson, 2012]{halvorson2012}
Halvorson, H. (2012).
\newblock What scientific theories could not be.
\newblock {\em Philosophy of Science}.

\bibitem[Halvorson, 2013]{halvorson2013}
Halvorson, H. (2013).
\newblock The semantic view, if plausible, is syntactic.
\newblock {\em Philosophy of Science}.

\bibitem[Halvorson, 2015]{halvorson2015}
Halvorson, H. (2015).
\newblock Scientific theories.
\newblock {\em Forthcoming in {O}xford {H}andbooks {O}nline}.

\bibitem[Hilbert, 1930]{hilbert1930}
Hilbert, D. (1930).
\newblock {\em Grundlagen der Geometrie}.
\newblock Teubner.

\bibitem[Hodges, 2008]{hodges2008}
Hodges, W. (2008).
\newblock {\em Model Theory}.
\newblock Cambridge University Press.

\bibitem[Johnstone, 2003]{johnstone}
Johnstone, P.~T. (2003).
\newblock {\em Sketches of an Elephant}.
\newblock Oxford.

\bibitem[Kanger, 1968]{kanger1968}
Kanger, S. (1968).
\newblock Equivalent theories.
\newblock {\em Theoria}.

\bibitem[Knox, 2013]{knox2013}
Knox, E. (2013).
\newblock Newtonian spacetime structure in light of the equivalence principle.
\newblock {\em The British Journal for the Philosophy of Science}.

\bibitem[Kuratowski, 1966]{kuratowski1966}
Kuratowski, K. (1966).
\newblock {\em Topology}, volume~1.
\newblock Academic Press.

\bibitem[{Mac Lane}, 1948]{maclane1948}
{Mac Lane}, S. (1948).
\newblock Groups, categories, and duality.
\newblock {\em Proceedings of the National Academy of Sciences}.

\bibitem[{Mac Lane}, 1971]{cwm}
{Mac Lane}, S. (1971).
\newblock {\em Categories for the working mathematician}.
\newblock Springer.

\bibitem[Makkai, 1991]{makkai}
Makkai, M. (1991).
\newblock {\em Duality and definability in first order logic}, volume 503.
\newblock American Mathematical Society.

\bibitem[Montague, 1957]{montague1957}
Montague, R. (1957).
\newblock {\em Contributions to the Axiomatic Foundations of Set Theory}.
\newblock PhD thesis, University of California, Berkeley.

\bibitem[Nestruev, 2002]{nestruev2002}
Nestruev, J. (2002).
\newblock {\em Smooth Manifolds and Observables}.
\newblock Springer.

\bibitem[North, 2009]{north2009}
North, J. (2009).
\newblock The `structure' of physics: A case study.
\newblock {\em Journal of Philosophy}.

\bibitem[Pelletier and Urquhart, 2003]{pelletier2003}
Pelletier, F.~J. and Urquhart, A. (2003).
\newblock Synonymous logics.
\newblock {\em Journal of Philosophical Logic}.

\bibitem[Pinter, 1978]{pinter1978}
Pinter, C.~C. (1978).
\newblock Properties preserved under definitional equivalence and
  interpretations.
\newblock {\em Mathematical Logic Quarterly}.

\bibitem[Quine, 1975]{quine1975}
Quine, W. V.~O. (1975).
\newblock On empirically equivalent systems of the world.
\newblock {\em Erkenntnis}.

\bibitem[Rosenstock et~al., 2015]{rosenstockbarrettweatherall}
Rosenstock, S., Barrett, T.~W., and Weatherall, J.~O. (2015).
\newblock On {E}instein algebras and relativistic spacetimes.
\newblock {\em Manuscript}.

\bibitem[Schwabh\"auser and Szczerba, 1975]{schwabhauser1975}
Schwabh\"auser, W. and Szczerba, L. (1975).
\newblock Relations on lines as primitive notions for euclidean geometry.
\newblock {\em Fundamenta Mathematicae}.

\bibitem[Schwabh\"auser et~al., 1983]{schwabhauser1983}
Schwabh\"auser, W., Szmielew, W., and Tarski, A. (1983).
\newblock {\em Metamathematische Methoden in der Geometrie}.
\newblock Springer.

\bibitem[Shoenfield, 1967]{shoenfield1967}
Shoenfield, J. (1967).
\newblock {\em Mathematical Logic}.
\newblock Addison-Wesley.

\bibitem[Sklar, 1982]{sklar1982}
Sklar, L. (1982).
\newblock Saving the noumena.
\newblock {\em Philosophical Topics}.

\bibitem[Swanson and Halvorson, 2012]{swansonhalvorsonunpublished}
Swanson, N. and Halvorson, H. (2012).
\newblock On {N}orth's `{T}he structure of physics'.
\newblock {\em Manuscript}.

\bibitem[Szczerba, 1977]{szczerba1977}
Szczerba, L. (1977).
\newblock Interpretability of elemenatry theories.
\newblock In {\em Logic, Foundations of Mathematics, and Computability Theory}.
  Springer.

\bibitem[Tarski, 1959]{tarski1959}
Tarski, A. (1959).
\newblock What is elementary geometry?
\newblock In {\em The Axiomatic Method With Special Reference to Geometry and
  Physics}. North-Holland.

\bibitem[Tsementzis, 2015]{tsementzis}
Tsementzis, D. (2015).
\newblock A syntactic characterization of {m}orita equivalence.
\newblock {\em Manuscript}.

\bibitem[van Fraassen, 2014]{vanfraassen2014}
van Fraassen, B.~C. (2014).
\newblock One or two gentle remarks about {H}ans {H}alvorson's critique of the
  semantic view.
\newblock {\em Manuscript}.

\bibitem[Weatherall, 2015a]{weatherallunpublished}
Weatherall, J.~O. (2015a).
\newblock Are {N}ewtonian gravitation and geometrized {N}ewtonian gravitation
  theoretically equivalent?
\newblock {\em Manuscript}.

\bibitem[Weatherall, 2015b]{weatherall2015}
Weatherall, J.~O. (2015b).
\newblock Categories and the foundations of classical field theories.
\newblock {\em Manuscript}.

\bibitem[Weatherall, 2015c]{weatherallgauge}
Weatherall, J.~O. (2015c).
\newblock Understanding gauge.
\newblock {\em Manuscript}.

\end{thebibliography}

\section*{Appendix}
\setcounter{section}{4}
\setcounter{lemma}{1}
\setcounter{theorem}{2}

This appendix contains a proof of Theorem 4.3, which we restate here for convenience.

\begin{theorem}
Let $\Sigma\subset\Sigma^+$ be signatures and $T$ a $\Sigma$-theory. Suppose that $T^+$ is a Morita extension of $T$ to $\Sigma^+$ and that $\phi(x_1,\ldots, x_n, \overline{x}_1,\ldots, \overline{x}_m)$ is a $\Sigma^+$-formula. Then for every code $\xi(x_1,\ldots, y_{n2})$ for the variables $x_1,\ldots, x_n$ there is a $\Sigma$-formula $\phi^*(\overline{x}_1,\ldots, \overline{x}_m, y_{11}, \ldots, y_{n2})$ such that 
\begin{align*}
T^+\vDash \forall_{\sigma_1} x_1\ldots&\forall_{\sigma_n} x_n\forall_{\overline{\sigma}_1} \overline{x}_1\ldots\forall_{\overline{\sigma}_m}\overline{x}_m\forall_{\sigma_{11}} y_{11}\ldots\forall_{\sigma_{n2}} y_{n2}\big(\xi(x_1,\ldots, y_{n2})\rightarrow\\
&\big(\phi(x_1,\ldots x_n, \overline{x}_1,\ldots, \overline{x}_m)\leftrightarrow\phi^*(\overline{x}_1,\ldots,\overline{x}_m, y_{11},\ldots, y_{n2})\big)\big)
\end{align*}
\end{theorem}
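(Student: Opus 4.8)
The plan is to argue by induction on the complexity of the $\Sigma^+$-formula $\phi$, where for the purposes of the induction the hypothesis is taken in its full strength: it holds for every subformula, for every code, and for every designation of which free variables are to be treated as having sorts in $\Sigma^+-\Sigma$. Throughout, the code $\xi(x_1,\ldots,y_{n2})$ for $x_1,\ldots,x_n$ records, via its conjuncts, how each new-sort variable is tied to its old-sort ``coding'' variables $y_{i1},y_{i2}$. The propositional cases are immediate. If $\phi=\lnot\psi$ I take $\phi^*=\lnot\psi^*$, and if $\phi$ is $\psi_1\land\psi_2$ (and analogously for $\lor,\rightarrow,\leftrightarrow$) I take $\phi^*=\psi_1^*\land\psi_2^*$, since the same code relates the $x_i$ to the $y_{ij}$ in both parts and the biconditional asserted in the conclusion distributes over the connective under the hypothesis $\xi$.

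The substantive case is the quantifier step. A quantifier over an old sort $\overline{\sigma}\in\Sigma$ is carried through unchanged, setting $\phi^*=\forall_{\overline{\sigma}}\overline{x}\,\psi^*$ (and likewise for $\exists$). The heart of the matter is a quantifier over a new sort $\sigma\in\Sigma^+-\Sigma$; by treating $\exists_\sigma x$ and recovering $\forall_\sigma x$ through the negation case, it suffices to eliminate one new-sort existential. Here I extend $\xi$ by a single conjunct $\xi_0$ coding the bound variable $x$ in the way dictated by the definition of $\sigma$, apply the inductive hypothesis to $\psi$ with this enlarged code to obtain $\psi^*$, and then replace the existential over $\sigma$ by existentials over the coding sorts, which lie in $\Sigma$: for a product sort, $\exists_{\sigma_1}w_1\exists_{\sigma_2}w_2\,\psi^*$; for a coproduct sort, the disjunction $\exists_{\sigma_1}w_1\,\psi_1^*\lor\exists_{\sigma_2}w_2\,\psi_2^*$ of the two translations arising from the two code conjuncts $\rho_1(w_1)=x$ and $\rho_2(w_2)=x$; for a subsort, $\exists_{\sigma_1}w_1(\phi_{\mathrm{sub}}(w_1)\land\psi^*)$ relativized to the defining $\Sigma$-formula $\phi_{\mathrm{sub}}$; and for a quotient sort, simply $\exists_{\sigma_1}w_1\,\psi^*$. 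In each case the equivalence with $\exists_\sigma x\,\psi$ is exactly what the Morita defining axiom for $\sigma$ supplies: the product axiom yields a unique element for each pair of projection values, the coproduct axiom presents every element as a unique injection image, the subsort axiom identifies the image of $i$ with the elements satisfying $\phi_{\mathrm{sub}}$, and the quotient axiom makes $\epsilon$ surjective.

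The hard part, which I expect to be the main obstacle, is the atomic case, since a $\Sigma^+$-atom may contain terms built both from the sort-defining function symbols $\pi_i,\rho_i,i,\epsilon$ and from new symbols of $\Sigma^+-\Sigma$ whose arities already lie in $\Sigma$, and a new-sort variable admits no replacement by a $\Sigma$-term. My plan is first to put each atom, using only first-order logic and a well-founded measure counting non-graph function occurrences, into a normal form in which every function symbol appears only in a graph atom $f(z_1,\ldots,z_k)=z$ with the $z$'s variables, by repeatedly introducing fresh existentials, e.g.\ rewriting $p(\ldots,f(\vec{t}\,),\ldots)$ as $\exists z(f(\vec{t}\,)=z\land p(\ldots,z,\ldots))$; the introduced quantifiers fall under the quantifier case already handled, whether $z$ has old or new sort. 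It then remains to translate the primitive graph atoms: those built from a $\Sigma$-arity symbol of $\Sigma^+-\Sigma$ are rewritten by their explicit definitions exactly as in the proof of Theorem 3.3, while those built from a sort-defining function are read off the code, so that under $\xi$ the atom $\pi_1(x_i)=\overline{x}$ becomes $y_{i1}=\overline{x}$, the atom $i(x_i)=\overline{x}$ becomes $y_{i1}=\overline{x}$, and so on. The genuinely delicate atoms are the equalities between new-sort terms: an equality $x_i=x_j$ of product sort translates, via the axiom that two elements agree iff their projections agree, to $y_{i1}=y_{j1}\land y_{i2}=y_{j2}$, whereas $\epsilon(s)=\epsilon(t)$ of quotient sort translates by the defining relation $\phi(s,t)$. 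Marshalling these finitely many cases, and verifying in each that every occurrence of a new-sort variable has already been linked to its coding variables by $\xi$ before the translation is extracted, completes the induction.
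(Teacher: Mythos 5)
Your proposal is correct, but it organizes the proof differently from the paper. The paper first proves a separate term lemma (Lemma 4.2 in the appendix): by structural induction on $\Sigma^+$-terms $t$, it produces a $\Sigma$-formula $\phi_t$ defining the graph $t=x$ relative to a code; the main induction on formulas then handles the atomic cases $t=s$ and $p(t_1,\ldots,t_k)$ by existentially quantifying over the values of the constituent terms and invoking the $\phi_{t_i}$, so that both inductions remain purely structural. You instead eliminate composite terms at the formula level, unnesting atoms into primitive graph atoms $f(z_1,\ldots,z_k)=z$ with fresh existentials and recycling your quantifier case, which forces you to replace structural induction by a well-founded measure. Both routes rest on the same mathematical content --- terms are traded for their graphs, and the flat atoms involving the sort-defining symbols $\pi_j$, $\rho_j$, $i$, $\epsilon$ are translated through the code using exactly the Morita axioms you cite (uniqueness for products, injectivity plus disjointness for coproducts, the defining formula for subsorts, the defining equivalence relation for quotients). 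Your approach buys uniformity: only flat atoms ever need bespoke translation, and your quantifier step (stated for $\exists$, with the coproduct case a disjunction over the two injections --- the dual of the paper's $\forall$ with a conjunction of the two coded translations) does everything else. The paper's approach buys simpler bookkeeping, since no non-structural measure is needed. One point to tighten when you write this up: the rewriting of $p(\ldots,f(\vec{t}\,),\ldots)$ as $\exists z\bigl(f(\vec{t}\,)=z\land p(\ldots,z,\ldots)\bigr)$ does not by itself decrease the count of non-graph function occurrences when $\vec{t}$ contains composite terms, so you should unnest innermost subterms first (those whose arguments are already variables), or use a sum-of-nesting-depths measure, to ensure each rewriting step strictly decreases the measure while the connective and quantifier steps never increase it.
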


We first prove the following lemma. Given a $\Sigma^+$-term $t$, we will again write $t(x_1,\ldots, x_n,\overline{x}_1,\ldots, \overline{x}_m)$ to indicate that the variables $x_1,\ldots, x_n$ have sorts $\sigma_1,\ldots, \sigma_n\in\Sigma^+-\Sigma$ and that the variables $\overline{x}_1,\ldots, \overline{x}_m$ have sorts $\overline{\sigma}_1,\ldots, \overline{\sigma}_m\in\Sigma$.

\begin{lemma}
Let $t(x_1,\ldots, x_n, \overline{x}_1,\ldots, \overline{x}_m)$ be a $\Sigma^+$-term of sort $\sigma$ and $x$ a variable of sort $\sigma$. Let $\xi(x, x_1,\ldots, x_n, y_{11}, \ldots, y_{n2})$ be a code for the variables $x,x_1,\ldots, x_n$. Then there is a $\Sigma$-formula $\phi_t(x,\overline{x}_1,\ldots, \overline{x}_m, y_{11}, \ldots, y_{n2})$ such that
\begin{align*}
T^+\vDash \forall_\sigma x \forall_{\sigma_1} x_1\ldots&\forall_{\sigma_n} x_n\forall_{\overline{\sigma}_1} \overline{x}_1\ldots\forall_{\overline{\sigma}_m}\overline{x}_m\forall_{\sigma_{11}} y_{11}\ldots\forall_{\sigma_{n2}} y_{n2}\big(\xi(x, x_1,\ldots, y_{n2})\rightarrow\\
&\big(t(x_1, \ldots, \overline{x}_m)=x\leftrightarrow\phi_t(x, \overline{x}_1,\ldots,\overline{x}_m, y_{11},\ldots, y_{n2})\big)\big)
\end{align*}
If $\sigma\in\Sigma$, then $x$ will not appear in the code $\xi$. If $\sigma\in\Sigma^+-\Sigma$, then $x$ will not appear in the $\Sigma$-formula $\phi_t$.
\end{lemma}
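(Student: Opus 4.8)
The plan is to induct on the structure of the $\Sigma^+$-term $t$. I read the hypothesis that $\xi$ is a code for $x, x_1, \ldots, x_n$ under the convention that the variable $x$ contributes a conjunct to $\xi$ exactly when its sort $\sigma$ lies in $\Sigma^+ - \Sigma$; when $\sigma \in \Sigma$ the variable $x$ is old-sorted and simply does not appear in $\xi$, which already gives the first side condition. The invariant I maintain is dual: when $\sigma \in \Sigma^+ - \Sigma$ the new-sorted variable $x$ gets eliminated from $\phi_t$ in favor of its old-sorted code witnesses, so that $x$ does not occur in $\phi_t$.

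For the base cases, first let $t$ be a variable. If $t$ is one of the old-sorted $\overline{x}_j$, then $t = x$ is already the $\Sigma$-atom $\overline{x}_j = x$ and I take $\phi_t$ to be this atom. If $t$ is one of the new-sorted $x_i$, then $t = x$ is the equality $x_i = x$ of two variables of the same new sort $\sigma_i$; since $\xi$ codes both $x_i$ and $x$ by old-sorted witnesses, I translate this equality according to how $T^+$ defines $\sigma_i$: equality of the corresponding projection witnesses (product), equality of the inclusion witnesses (subsort), the defining equivalence relation applied to the witnesses (quotient), and, for a coproduct, equality of witnesses when both codes select the same injection and a false $\Sigma$-formula such as $\lnot(y_{i1} = y_{i1})$ when they select different injections. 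These equivalences hold because the product, subsort, and coproduct axioms make the relevant function symbols injective and the coproduct injections have disjoint images. If instead $t$ is a constant $c$, then either $c \in \Sigma$ and $\phi_t$ is the atom $c = x$, or $c \in \Sigma^+ - \Sigma$, in which case $c$ has old sort and an explicit definition $\forall_\sigma y (y = c \leftrightarrow \psi(y))$ with $\psi$ a $\Sigma$-formula, and I take $\phi_t := \psi(x)$.

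For the inductive step, write $t = f(t_1, \ldots, t_k)$ with $f$ of arity $\tau_1 \times \cdots \times \tau_k \to \sigma$, and peel off $f$ using the logical validity
\[
f(t_1, \ldots, t_k) = x \;\leftrightarrow\; \exists_{\tau_1} z_1 \ldots \exists_{\tau_k} z_k \Big( \textstyle\bigwedge_j (t_j = z_j) \land f(z_1, \ldots, z_k) = x \Big)
\]
with the $z_j$ fresh. Each $t_j = z_j$ is translated by the induction hypothesis applied to $t_j$ and $z_j$, using the conjuncts of $\xi$ for $x_1, \ldots, x_n$ together with a freshly chosen conjunct and witnesses for each new-sorted $z_j$. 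The atom $f(z_1, \ldots, z_k) = x$ is translated by a case analysis on the six kinds of function symbol $f$ can be: it is already a $\Sigma$-atom when $f \in \Sigma$; it is the defining $\Sigma$-formula $\phi_f(z_1, \ldots, z_k, x)$ when $f$ is an ordinary new function symbol with all sorts in $\Sigma$; and it is read off the relevant code when $f$ is a sort-defining symbol, namely $x$ equals a projection witness (for $\pi_l$), a code witness equals $x$ (for an inclusion), the defining equivalence relation relating $z_1$ to the representative witness of $x$ (for $\epsilon$), and, using the code of $x$, equality or a contradiction (for $\rho_l$). Finally I discharge the new-sorted existentials: because the defining axioms of $\tau_j$ put the elements of $\tau_j$ in definable correspondence with their code witnesses, each $\exists_{\tau_j} z_j$ is replaced by old-sorted existentials over those witnesses — unguarded for a product, guarded by the membership formula $\phi$ for a subsort, ranging over representatives for a quotient, and split into a disjunction over the two injections for a coproduct — while old-sorted $z_j$ retain their legitimate old-sort quantifiers. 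The resulting $\phi_t$ is a $\Sigma$-formula, and $x$ is eliminated precisely when $\sigma \in \Sigma^+ - \Sigma$ (in the base case $x_i = x$ and in the inductive $\rho_l$ and $\epsilon$ cases), securing the second side condition.

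I expect the main obstacle to be the bookkeeping in this last step: one must choose codes for the fresh subterm variables compatibly with $\xi$ and verify that each replacement of a new-sorted existential by existentials over its old-sorted witnesses is a genuine equivalence under $T^+$, not merely an implication. This is exactly where the uniqueness, surjectivity, injectivity, and disjointness clauses of the four sort definitions (and the admissibility conditions entailed by $T$) do the work, and it is the only place where something beyond routine unwinding of definitions is required.
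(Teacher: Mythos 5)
Your proof is correct, and its skeleton is the same as the paper's: induction on the structure of $t$, with equality between coded variables read off the code via the product/subsort/quotient/coproduct axioms, and constants and ordinary function symbols handled through their defining $\Sigma$-formulas. Where you genuinely differ is in the organization of the inductive step. You peel off the head symbol $f$ uniformly, introducing an existentially quantified fresh variable $z_j$ for every argument, and must then discharge any existential whose sort lies in $\Sigma^+-\Sigma$ by re-expressing it as old-sorted quantification over code witnesses (unguarded for products, guarded by the membership formula for subsorts, over representatives for quotients, split into a disjunction for coproducts). The paper instead arranges the case analysis so that new-sorted existentials never arise: it uses (i) the fact that any $f\in\Sigma$, and any ordinary defined $f\in\Sigma^+-\Sigma$, has all argument sorts in $\Sigma$, so the peeled existentials are automatically old-sorted; and (ii) the fact that in a single Morita extension no function symbol has a product sort or subsort as codomain, so a term of such a sort must be a bare variable --- hence when $f$ is a projection $\pi_l$ or an inclusion $i$, its argument is one of the $x_i$ and the translation is read directly off the code with no quantifier at all; the remaining sort-defining symbols $\epsilon$ and $\rho_l$ take old-sorted arguments, so only old-sorted existentials appear there. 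Your discharge step is sound --- it is in effect the quantifier case of Theorem 4.3 inlined into the term lemma, and you correctly identify that the surjectivity, injectivity, and disjointness clauses of the sort definitions are what make it an equivalence rather than an implication --- so your route costs some duplicated machinery and extra bookkeeping, while the paper's route buys a leaner proof at the price of invoking the structural facts (i) and (ii).
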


\begin{proof}
We induct on the complexity of $t$. First, suppose that $t$ is a variable $x_i$ of sort $\sigma$. If $\sigma\in\Sigma$, then there are no variables in $t$ with sorts in $\Sigma^+-\Sigma$. So $\xi$ must be the empty code. Let $\phi_t(x, x_i)$ be the $\Sigma$-formula $x=x_i$. This choice of $\phi_t$ trivially satisfies the desired property. If $\sigma\in\Sigma^+-\Sigma$, then there are four cases to consider. We consider the cases where $\sigma$ is a product sort and a subsort. The coproduct and quotient cases follow analogously. Suppose that $T^+$ defines $\sigma$ as a product sort with projections $\pi_1$ and $\pi_2$ of arity $\sigma\rightarrow\sigma_1$ and $\sigma\rightarrow\sigma_2$. A code $\xi$ for the variables $x$ and $x_i$ must therefore be the formula
$$
\pi_1(x)=y_1\land\pi_2(x)=y_2\land\pi_1(x_i)=y_{i1}\land\pi_2(x_i)=y_{i2}
$$
One defines the $\Sigma$-formula $\phi_t$ to be $y_1=y_{i1}\land y_2=y_{i2}$ and verifies that it satisfies the desired property. On the other hand, suppose that $T^+$ defines $\sigma$ as a subsort with injection $i$ of arity $\sigma\rightarrow\sigma_1$. A code $\xi$ for the variables $x$ and $x_i$ is therefore the formula
$$
i(x)=y\land i(x_i)=y_{i1}
$$
Let $\phi_t$ be the $\Sigma$-formula $y=y_{i1}$. The desired property again holds.

Second, suppose that $t$ is the constant symbol $c$. Note that it must be the case that $c$ is of sort $\sigma\in\Sigma$. If $c\in\Sigma$, then letting $\phi_t$ be the $\Sigma$-formula $x=c$ trivially yields the result. If $c\in\Sigma^+-\Sigma$, then there is some $\Sigma$-formula $\psi(x)$ that $T^+$ uses to explicitly define $c$. Letting $\phi_t=\psi$ yields the desired result.

For the third (and final) step of the induction, we suppose that $t$ is a term of the form
$$
f\big(t_1(x_1,\ldots, x_n, \overline{x}_1,\ldots, \overline{x}_m), \ldots, t_k(x_1,\ldots, x_n, \overline{x}_1,\ldots, \overline{x}_m)\big)
$$
where $f\in\Sigma^+$ is a function symbol. We show that the result holds for $t$ if it holds for all of the terms $t_1,\ldots, t_k$. There are three cases to consider. First, if $f\in\Sigma$, then it must be that $f$ has arity $\sigma_1\times\ldots\times\sigma_k\rightarrow\sigma$, where $\sigma,\sigma_1,\ldots, \sigma_k\in\Sigma$. Let $\xi$ be a code for $x_1,\ldots, x_n$. We define $\phi_t$ to be the $\Sigma$-formula
$$
\exists_{\sigma_1} z_1\ldots\exists_{\sigma_k}z_k\big(\phi_{t_1}(z_1, \overline{x}_1,\ldots, y_{n2})\land\ldots\land\phi_{t_k}(z_k, \overline{x}_1,\ldots y_{n2})\land f(z_1,\ldots, z_k)=x\big)
$$
where each of the $\phi_{t_i}$ exists by our inductive hypothesis. One can verify that $\phi_t$ satisfies the desired property. Second, if $f\in\Sigma^+-\Sigma$ is defined by a $\Sigma$-formula $\psi(z_1,\ldots, z_k, x)$ then one defines $\phi_t$ in an analogous manner to above. (Note that in this case the arity of $f$ is again $\sigma_1\times\ldots\times\sigma_k\rightarrow \sigma$ with $\sigma_1,\ldots, \sigma_k,\sigma\in\Sigma$.) 

Third, we need to verify that the result holds if $f$ is a function symbol that is used in the definition of a new sort. We discuss the cases where $f$ is $\pi_1$ and where $f$ is $\epsilon$. Suppose that $f$ is $\pi_1$ with arity $\sigma\rightarrow\sigma_1$. Then it must be that the term $t_1$ is a variable $x_i$ of sort $\sigma$ since there are no other $\Sigma^+$-terms of sort $\sigma$. So the term $t$ is $\pi_1(x_i)$. Let $\xi(x_i, y_{i1}, y_{i2})$ be a code for $x_i$. It must be that $\xi$ is the formula
$$
\pi_1(x_i)=y_{i1}\land\pi_2(x_i)=y_{i2}
$$
Letting $\phi_t$ be the formula $y_{i1}=x$ yields the desired result. On the other hand, suppose that $f$ is the function symbol $\epsilon$ of arity $\sigma_1\rightarrow\sigma$, where $\sigma$ is a quotient sort defined by the $\Sigma$-formula $\psi(z_1, z_2)$. The term $t$ in this case is $\epsilon(t_1(x_1,\ldots, x_n, \overline{x}_1,\ldots, \overline{x}_m))$ and we assume that the result holds for the $\Sigma^+$-term $t_1$ of sort $\sigma_1\in\Sigma$. Let $\xi$ be a code for the variables $x, x_1,\ldots, x_n$. This code determines a code $\overline{\xi}$ for the variables $x_1,\ldots, x_n$ by ``forgetting'' the conjunct $\epsilon(y)=x$ that involves the variable $x$. We use the code $\overline{\xi}$ and the inductive hypothesis to obtain the formula $\phi_{t_1}$. Then we define $\phi_t$ to be the $\Sigma$-formula
$$
\exists_{\sigma_1} z\big(\phi_{t_1}(z, \overline{x}_1,\ldots, \overline{x}_m, y_{11},\ldots, y_{n2})\land\psi(y, z)\big)
$$
Considering the original code $\xi$, one verifies that the result holds for $\phi_{t_1}$.
\end{proof}

We now turn to the proof of the main result.

\begin{proof}[Proof of Theorem 4.3]
We induct on the complexity of $\phi$. Suppose that $\phi$ is the formula $t(x_1,\ldots, x_n, \overline{x}_1,\ldots, \overline{x}_m)=s(x_1,\ldots, x_n, \overline{x}_1,\ldots,\overline{x}_m)$ for $\Sigma^+$-terms $t$ and $s$. Let $\xi(x_1,\ldots, y_{n2})$ be a code for $x_1,\ldots, x_n$ and let $x$ be a variable of sort $\sigma$. If $t$ and $s$ are both terms of sort $\sigma\in\Sigma$, then one uses Lemma 4.2 and the code $\xi$ to generate the $\Sigma$-formulas $\phi_t(x,\overline{x}_1,\ldots, \overline{x}_m, y_{11}, \ldots, y_{n2})$ and $\phi_s(x,\overline{x}_1,\ldots, \overline{x}_m, y_{11}, \ldots, y_{n2})$. The $\Sigma$-formula $\phi^*$ is then defined to be 
$$
\exists_\sigma x\big(\phi_t(x,\overline{x}_1,\ldots, \overline{x}_m, y_{11}, \ldots, y_{n2})\land\phi_s(x,\overline{x}_1,\ldots, \overline{x}_m, y_{11}, \ldots, y_{n2})\big)
$$
One can verify that this definition of $\phi^*$ satisfies the desired result. 

If $t$ and $s$ are of sort $\sigma\in\Sigma^+-\Sigma$, then there are four cases to consider. We show that the result holds when $T^+$ defines $\sigma$ as a product sort or a quotient sort. The coproduct and subsort cases follow analogously. If $T^+$ defines $\sigma$ as a product sort with projections $\pi_1$ and $\pi_2$ of arity $\sigma\rightarrow\sigma_1$ and $\sigma\rightarrow\sigma_2$, then we define a code $\overline{\xi}(x, x_1,\ldots, y_{n2}, v_1, v_2)$ for the variables $x, x_1,\ldots, x_n$ by
$$
\xi(x_1,\ldots, y_{n2})\land\pi_1(x)=v_1\land\pi_2(x)=v_2
$$
Lemma 4.2 and the code $\overline{\xi}$ for the variables $x,x_1,\ldots, x_n$ generate the $\Sigma$-formulas $\phi_t(\overline{x}_1,\ldots, \overline{x}_m, y_{11}, \ldots, y_{n2}, v_1, v_2)$ and $\phi_s(\overline{x}_1,\ldots, \overline{x}_m, y_{11}, \ldots, y_{n2}, v_1, v_2)$. We then define the $\Sigma$-formula $\phi^*$ to be
\begin{align*}
\exists_{\sigma_1}v_1\exists_{\sigma_2}v_2\big(&\phi_t(\overline{x}_1,\ldots, \overline{x}_m, y_{11}, \ldots, y_{n2}, v_1, v_2)\\ 
&\land \phi_s(\overline{x}_1,\ldots, \overline{x}_m, y_{11}, \ldots, y_{n2}, v_1, v_2)\big)
\end{align*}
One can verify that $\phi^*$ again satisfies the desired result. 

If $T^+$ defines $\sigma$ as a quotient sort with projection $\epsilon$ of arity $\sigma_1\rightarrow\sigma$, then we again define a new code $\overline{\xi}(x, x_1,\ldots, y_{n2}, v)$ for the variables $x, x_1,\ldots, x_n$ by
$$
\xi(x_1,\ldots, y_{n2})\land\epsilon(v)=x
$$
Lemma 4.2 and the code $\overline{\xi}$ for the variables $x, x_1,\ldots, x_n$ again generate the $\Sigma$-formulas $\phi_t(\overline{x}_1,\ldots, \overline{x}_m, y_{11}, \ldots, y_{n2}, v)$ and $\phi_s(\overline{x}_1,\ldots, \overline{x}_m, y_{11}, \ldots, y_{n2}, v)$. We define the $\Sigma$-formula $\phi^*$ to be
$$
\exists_{\sigma_1} v \big(\phi_t(\overline{x}_1,\ldots, \overline{x}_m, y_{11}, \ldots, y_{n2}, v)\land \phi_s(\overline{x}_1,\ldots, \overline{x}_m, y_{11}, \ldots, y_{n2}, v)\big)
$$
One again verifies that this $\phi^*$ satisfies the desired property. So the result holds when $\phi$ is of the form $t=s$ for $\Sigma^+$-terms $t$ and $s$.

Now suppose that $\phi(x_1,\ldots, x_n,\overline{x}_1,\ldots, \overline{x}_m)$ is a $\Sigma^+$-formula of the form 
$$
p(t_1(x_1,\ldots, x_n,\overline{x}_1,\ldots,\overline{x}_m),\ldots, t_k(x_1,\ldots, x_n, \overline{x}_1,\ldots, \overline{x}_m))
$$ 
where $p$ has arity $\sigma_1\times\ldots\times\sigma_k$. Note that it must be that $\sigma_1,\ldots,\sigma_k\in\Sigma$. Either $p\in\Sigma$ or $p\in\Sigma^+-\Sigma$. We consider the second case. (The first is analogous.) Let $\psi(z_1,\ldots, z_k)$ be the $\Sigma$-formula that $T^+$ uses to explicitly define $p$ and let $\xi(x_1,\ldots, y_{n2})$ be a code for $x_1,\ldots, x_n$. Lemma 4.2 and $\xi$ generate the $\Sigma$-formulas $\phi_{t_i}(z_i, \overline{x}_1,\ldots, \overline{x}_m, y_{11}, \ldots, y_{n2})$ for each $i=1,\ldots, k$. We define $\phi^*$ to be the $\Sigma$-formula
\begin{align*}
\exists_{\sigma_1}z_1\ldots\exists_{\sigma_k}z_k\big(&\phi_{t_1}(z_1,\overline{x}_1,\ldots, \overline{x}_m,y_{11},\ldots, y_{n2})\land\ldots\\ &\land\phi_{t_k}(z_k,\overline{x}_1,\ldots, \overline{x}_m,y_{11},\ldots, y_{n2})\land\psi(z_1,\ldots, z_k)\big)
\end{align*}
One can again verify that the result holds for this choice of $\phi^*$.

We have covered the ``base cases'' for our induction. We now turn to the inductive step. We consider the cases of $\lnot, \land$, and $\forall$. Suppose that the result holds for $\Sigma^+$-formulas $\phi_1$ and $\phi_2$. Then it trivially holds for $\lnot\phi_1$ by letting $(\lnot\phi)^*$ be $\lnot(\phi^*)$. It also trivially holds for $\phi_1\land\phi_2$ by letting $(\phi_1\land\phi_2)^*$ be $\phi_1^*\land\phi_2^*$. 

The $\forall_{\sigma_i}$ case requires more work. If $x_i$ is a variable of sort $\sigma_i\in\Sigma$, we let $(\forall_{\sigma_i} x_i \phi_1)^*$ be $\forall_{\sigma_i} x_i (\phi_1^*)$. The only non-trivial part of the inductive step is when one quantifies over variables with sorts in $\Sigma^+-\Sigma$. Suppose that $\phi(x_1,\ldots, x_n, \overline{x}_1,\ldots, \overline{x}_m)$ is a $\Sigma^+$-formula and that the result holds for it. We let $x_i$ be a variable of sort $\sigma_i\in\Sigma^+-\Sigma$ and we show that the result also holds for the $\Sigma$-formula $\forall_{\sigma_i}x_i\phi(x_1,\ldots, x_n, \overline{x}_1,\ldots, \overline{x}_m)$. There are again four cases. We show that the result holds when $\sigma_i$ is a product sort and a coproduct sort. The cases of subsorts and quotient sorts follow analogously.

Suppose that $T^+$ defines $\sigma_i$ as a product sort with projections $\pi_1$ and $\pi_2$ of arity $\sigma_i\rightarrow\sigma_{i1}$ and $\sigma_i\rightarrow\sigma_{i2}$. Let $\xi(x_1,\ldots, y_{n2})$ be a code for the variables $x_1,\ldots, x_{i-1}, x_{i+1}, \ldots, x_n$ (these are all of the free variables in $\forall_{\sigma_i}x_i\phi$ with sorts in $\Sigma^+-\Sigma$). We define a code $\overline{\xi}$ for the variables $x_1,\ldots, x_{i-1}, x_i, x_{i+1}, \ldots, x_n$ by
$$
\xi(x_1,\ldots, y_{n2})\land\pi_1(x_i)=v_1\land\pi_2(x_i)=v_2
$$
One uses the code $\overline{\xi}$ and the inductive hypothesis to generate the $\Sigma$-formula $\phi^*(\overline{x}_1,\ldots, \overline{x}_m, y_{11},\ldots, y_{n2}, v_1, v_2)$. We then define the $\Sigma$-formula $(\forall_{\sigma_i}x_i\phi)^*$ to be
$$
\forall_{\sigma_{i1}} v_1\forall_{\sigma_{i2}}v_2\phi^*(\overline{x}_1,\ldots, \overline{x}_m, y_{11},\ldots, y_{n2}, v_1, v_2)
$$
And one verifies that the desired result holds for this choice of $(\forall_{\sigma_i}x_i\phi)^*$. (The definition of $(\forall_{\sigma_i}x_i\phi)^*$ is perfectly intuitive. Quantifying over a variable $x_i$ of product sort $\sigma_i$ can be thought of as ``quantifying over pairs of elements of sorts $\sigma_{i1}$ and $\sigma_{i2}$.'')

Suppose that $T^+$ defines $\sigma_i$ as a coproduct sort with injections $\rho_1$ and $\rho_2$ of arity $\sigma_{i1}\rightarrow\sigma_{i}$ and $\sigma_{i2}\rightarrow\sigma_i$. Let $\xi(x_1,\ldots, y_{n2})$ be a code for the variables $x_1,\ldots, x_{i-1}, x_{i+1}, \ldots, x_n$ (these are again all of the free variables in $\forall_{\sigma_i}x_i\phi$ with sorts in $\Sigma^+-\Sigma$). We define two different codes $\overline{\xi}$ for the variables $x_1,\ldots, x_{i-1}, x_i, x_{i+1}, \ldots, x_n$ by
\begin{gather*}
\xi(x_1,\ldots, y_{n2})\land\rho_1(v_1)=x_i\\
\xi(x_1,\ldots, y_{n2})\land\rho_2(v_2)=x_i
\end{gather*}
We will call the first code $\xi'(x_1,\ldots, y_{n2}, v_1)$ and the second $\xi''(x_1,\ldots, y_{n2}, v_2)$. We use these two codes and the inductive hypothesis to generate $\Sigma$-formulas $\phi^{*'}$ and $\phi^{*''}$. We then define the $\Sigma$-formula $(\forall_{\sigma_i}x_i\phi)^*$ to be
\begin{align*}
\forall_{\sigma_{i1}}v_1\forall_{\sigma_{i2}}v_2\big(&\phi^{*'}(\overline{x}_1,\ldots,\overline{x}_m,y_{11}, \ldots, y_{n2}, v_2)\\
&\land\phi^{*''}(\overline{x}_1,\ldots, \overline{x}_m,y_{11},\ldots, y_{n2}, v_2)\big)
\end{align*}
One can verify that the desired result holds again for this definition of $(\forall_{\sigma_i}x_i\phi)^*$. (The definition is again intuitive. Quantifying over a variable $x_i$ of coproduct sort $\sigma_i$ can be thought of as ``quantifying over \textit{both} elements of sort $\sigma_{i1}$ and elements of sort $\sigma_{i2}$.'')
\end{proof}

\end{document}